\newtheorem{lemma}{Lemma}
\newtheorem{theorem}{Theorem}
\newtheorem{corollary}{Corollary}
\newtheorem{remark}{Remark}
  \def\0{\boldsymbol 0}
\def\etal{\itshape et al.}
\newcommand{\bec}{\begin{center}}
\newcommand{\enc}{\end{center}}
\newcommand{\bee}{\begin{eqnarray*}}
\newcommand{\ene}{\end{eqnarray*}}
\newcommand{\beq}{\begin{equation}}
\newcommand{\eeq}{\end{equation}}
\def\P{ {\sf P}}                  \def\E{ {\sf E}\,}
\newcommand{\lra}[1]{\left( #1 \right)}
\newcommand{\lrd}[1]{\left\lfloor #1 \right\rfloor}
\def\ll{\left}
\def\rr{\right}
\def\be{\begin{eqnarray}}
\def\en{\end{eqnarray}}
\def\bes{\begin{eqnarray*}}
\def\ens{\end{eqnarray*}}
\def\toind{\overset{d}{\longrightarrow}}
\begin{document}

\title{\bf Bahadur representations for the bootstrap median absolute deviation and the application to\\ projection depth weighted mean}
\vskip 5mm

\author {{Qing Liu$^{a, b}$,
        Xiaohui Liu$^{a, b}$   \footnote{Corresponding author's email: csuliuxh912@gmail.com.},
        Zihao Hu$^{a, b}$}\\ \\[1ex]
        {\em\footnotesize $^a$ School of Statistics, Jiangxi University of Finance and Economics, Nanchang, Jiangxi 330013, China}\\
        {\em\footnotesize $^b$ Research Center of Applied Statistics, Jiangxi University of Finance and Economics, Nanchang,}\\ {\em\footnotesize Jiangxi 330013, China}\\
}

\date{}
\maketitle

\begin{center}
{\sc Abstract}
\end{center}
Median absolute deviation (hereafter MAD) is known as a robust alternative to the ordinary variance. It has been widely utilized to induce robust statistical inferential procedures. In this paper, we investigate the strong and weak Bahadur representations of its bootstrap counterpart. As a useful application, we utilize the results to derive the weak Bahadur representation of the bootstrap sample projection depth weighted mean---a quite important location estimator depending on MAD.
\vspace{2mm}

{\small {\bf\itshape Key words:} Bootstrap MAD; Bootstrap projection depth weighted mean; Bahadur representation}
\vspace{2mm}

{\small {\bf2000 Mathematics Subject Classification Codes:} 62F10; 62F40; 62F35}

\setlength{\baselineskip}{1.5\baselineskip}

\section{Introduction}
\paragraph{}
\label{Introduction}

Let $F$ be the distribution function of $X$. The related median $v = \text{Med}(X)$ is then defined as $F^{-1}(1/2) = \inf\{x: F(x)\geq 1/2\}$ which satisfies
\be
 \label{eq-1107-1}
    F(v-)\leq 1/2 \leq F(v).
\en
Suppose $X_1, X_2, \cdots, X_n \overset{iid}{\sim} F$ and let $X_{1:n}, X_{2:n}, \cdots X_{n:n}$ be the related order statistics. The sample median is usually defined as
\bes
    \textrm{Med}_{n} = \frac{X_{\lrd{\frac{n+1}{2}}:n} + X_{\lrd{\frac{n+2}{2}}:n}}{2},
\ens
where $\lrd{\cdot}$ denotes the floor function. In the literature, the sample median is known as its high robustness properties and usually serves as an alternative to the sample mean in the location setting \citep{Sma1990}.

Based on $\textrm{Med}_{n}$ above, the sample MAD is defined as
\be
\label{eqn:MAD}
    \textrm{MAD}_{n} = \frac{W_{\lrd{\frac{n+1}{2}}:n} + W_{\lrd{\frac{n+2}{2}}:n}}{2},
\en
where $W_{i:n}$, $i = 1, 2, \cdots, n$, denote the order statistics related to $W_1 = |X_1 - \textrm{Med}_{n}|, W_2 = |X_2 - \textrm{Med}_{n}|, \cdots, W_n = |X_n - \textrm{Med}_{n}|$. Clearly, the population version, say $\xi$, of $\textrm{MAD}_{n}$ is the median of the distribution $G$ of $|X-v|$, i.e.,
\be
\label{eq-1107-2}
    G(y)=\P(|X-v|\leq y)=F(v+y)- F(v-y-),~ y\in \mathbb{R}.
\en

Similar to the sample median, $\textrm{MAD}_{n}$ is a famous robust scatter measure and hence a desirable alternative to the sample variance when outliers are present \citep{MS2009}. They together are widely used in statistics to construct some statistical inferential procedures, which have high breakdown point robustness. Among them, one famous example is the projection depth studied by \cite{Liu1992, Zuo2003}, which depends on a combination of one location estimator and one scale estimator with the most commonly used combination being (Med, MAD). Based on the projection depth, a few desirable estimators, as well as some inferential procedures, have been developed in the past decades; see, e.g., \cite{Zuo2003, ZCH2004, Zuo2006, DG2012} and references therein for details.

One well-known projection depth based estimator is the projection depth weighted mean, which includes the famous Stahel-Donoho estimator as a special case \citep{Don1982, Sta1981}. It turns out that this estimator enjoys very high efficiency and robustness \citep{ZCH2004}. Especially, it is interesting to find by \cite{Zuo2010} that combining the projection depth weighted mean with the bootstrap procedure, it is possible to construct a confidence interval \emph{which is even more optimal than the classical $t$ confidence interval} in the sense of having better finite sample performance. Nevertheless, the good property of the bootstrap sample projection depth weighted mean of \cite{Zuo2010} was only confirmed by some simulated examples, having no theoretical argument related to its limit distribution as far to the best of our knowledge. This motivates us to conduct the current research.

To achieve this, we need first to investigate the asymptotic properties of the related bootstrap median and bootstrap MAD. In the literature, it is known that the Bahadur representation, named after \cite{Bah1966}, is a useful tool to study the asymptotic properties of an estimator, because it provides not only an approximation to the estimator in the form of a sum of independent variables, but also a higher-order remainder from which one can see the convergence rate of the estimator as the sample size $n$ increases. Much attention has been paid to this tool since its introduction; see, e.g. \cite{Kie1967, HS1996, Wu2005, Wen2011} for details. Recently, \cite{MS2009} considered the Bahadur representation of the sample MAD, and \cite{Zuo2015} considered the Bahadur representations of the bootstrap sample quantiles.

In view of this, we will first consider the Bahadur representations for the \emph{bootstrap sample MAD}, and then apply the results to the case of \emph{bootstrap sample projection depth weighted mean}. Given the random sample $X_{1},X_{2},\ldots, X_{n}$ above, let $X^{*}_{1},X^{*}_{2},\ldots, X^{*}_{n}$ be the bootstrap sample from its empirical function $F_{n}$. Hereafter, denote $F^{*}_{n}$, $\textrm{Med}_{n}^*$ and $\textrm{MAD}_{n}^*$ as the empirical function, median and MAD corresponding to $X^{*}_{1},X^{*}_{2},\ldots, X^{*}_{n}$, respectively.

 Although the definition of MAD is essentially a quantile of the absolute deviation values, the result of \cite{Zuo2015} \emph{cannot be trivially applied directly to} the bootstrap sample MAD, as well as the bootstrap sample projection weighted mean, since it involves in the sample median, which depends on all of the bootstrapping observations; see \eqref{eqn:MAD}. 

For simplicity, we introduce some frequently used notations before starting the discussions. For any random event $A$, denote $\P^{*}(A)=\P(A|X_{1},X_{2}, \ldots, X_{n})$, i.e., the conditional probability. 
$\epsilon$ is any given positive constant, whose value may be not the same at different places. The term `\emph{a.s.}' stands for `almost surely'. For any \emph{fixed} integers $l$ and $m$ such that $\lrd{\frac{n}{2}} \ge l\geq 1$ and $\lrd{\frac{n}{2}} \ge m\geq 1$, denote $\hat{v}_{n,l}= X_{\ll\lfloor\frac{n+l}{2}\rr\rfloor:n}$, $\hat{\xi}_{n,m,l} = W_{\ll\lfloor\frac{n+m}{2}\rr\rfloor :n, l}$, with $W_{1:n, l} \leq \ldots \leq W_{n:n, l}$ the ordered statistics of $W_{i, l}=|X_i-\hat{v}_{n,l}|$, $1\leq i\leq n$. Their bootstrap counterparts will be denoted by $\hat{v}^{*}_{n,l}= X^{*}_{\ll\lfloor\frac{n+l}{2}\rr\rfloor:n}$ and $\hat{\xi}^{*}_{n,m,l}= W^{*}_{\ll\lfloor\frac{n+m}{2}\rr\rfloor :n, l}$, respectively. Without confusion, we assume that all $l$ and $m$ are fixed and satisfy $\lrd{\frac{n}{2}} \ge l, m \ge 1$ in the sequel.

The rest of this paper is organized as follows. Section \ref{Sec:strong} states the strong Bahadur representation of the bootstrap sample MAD, while its weak Bahadur representation is given in Section \ref{Sec:weak}. 
Based on these representations and the result of \cite{Zuo2015}, we further derive the joint distribution of the bootstrap sample median and MAD in Section \ref{Sec:joint}. As an application, we employ these results to further derive the weak Bahadur representation, as well as the limit distribution, of the bootstrap projection depth weighted mean. Some concluding remarks end this paper.

\section{Strong Bahadur representation for the bootstrap MAD}\label{Sec:strong}
\paragraph{}

In this section, we consider the strong Bahadur representation for the bootstrap sample MAD under a twice differentiable condition. Similar representations for the sample MAD can be found in \cite{MS2009}. Before proceeding to the derivation of the main result, we need several preliminary lemmas as follows.

\begin{lemma}(Hoeffding; see \cite{Ser1980})
  Let $Y_1, Y_2, \cdots, Y_n$ be independent random variables satisfying $P(a \le X_i \le b) = 1$, each $i$, where $a < b$. Then for $t > 0$,
  \bes
    P\ll(\sum_{i=1}^n (Y_i - E(Y_i)) \ge nt \rr) \le e^{-2nt^2 / (b - a)^2}.
  \ens
\end{lemma}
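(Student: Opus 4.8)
The plan is to use the exponential (Chernoff) bounding technique. First I would fix $s > 0$ and apply Markov's inequality to the exponentiated sum: writing $Z_i = Y_i - E(Y_i)$, one has
\[
P\lra{\sum_{i=1}^n Z_i \ge nt} = P\lra{e^{s\sum_{i=1}^n Z_i} \ge e^{snt}} \le e^{-snt}\, E\lrb{e^{s\sum_{i=1}^n Z_i}}.
\]
By the independence of the $Y_i$ (hence of the centered variables $Z_i$), the expectation factorizes as $\prod_{i=1}^n E[e^{s Z_i}]$, so the entire problem reduces to controlling the moment generating function of a single centered, bounded random variable.

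The core step---and the step I expect to be the main obstacle---is \emph{Hoeffding's lemma}: for any random variable $Z$ with $a' \le Z \le b'$ a.s. and $E(Z) = 0$, one has $E[e^{sZ}] \le e^{s^2(b'-a')^2/8}$. Here each $Z_i = Y_i - E(Y_i)$ is centered and lies in an interval of width $b - a$ (centering shifts the endpoints but preserves the width). I would prove the lemma by convexity: since $z \mapsto e^{sz}$ is convex, for $z \in [a', b']$ it is dominated by its chord, $e^{sz} \le \frac{b'-z}{b'-a'}e^{sa'} + \frac{z-a'}{b'-a'}e^{sb'}$. Taking expectations and using $E(Z) = 0$ gives $E[e^{sZ}] \le \frac{b'}{b'-a'}e^{sa'} - \frac{a'}{b'-a'}e^{sb'} =: e^{\psi(s)}$. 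A second-order Taylor expansion of $\psi$, using $\psi(0) = \psi'(0) = 0$ together with the uniform bound $\psi''(s) \le (b'-a')^2/4$ (which holds because $\psi''(s)$ is the variance of a random variable supported in $[a', b']$), then yields $\psi(s) \le s^2 (b'-a')^2/8$, as required.

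Combining the factorization with Hoeffding's lemma gives $E[e^{s\sum Z_i}] \le e^{ns^2(b-a)^2/8}$, hence
\[
P\lra{\sum_{i=1}^n Z_i \ge nt} \le \exp\lra{-snt + \frac{ns^2(b-a)^2}{8}}.
\]
Finally I would optimize the right-hand side over $s > 0$. The exponent is a quadratic in $s$, minimized at $s^{*} = 4t/(b-a)^2$, and substituting $s^{*}$ collapses it to $-2nt^2/(b-a)^2$, which is exactly the claimed bound. The factorization and the optimization are routine; the only real content lies in Hoeffding's lemma, and within it the uniform second-derivative estimate $\psi''(s) \le (b-a)^2/4$, which is where I expect the main effort to be concentrated.
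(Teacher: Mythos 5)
Your proof is correct and complete: the Chernoff bound, the factorization by independence, Hoeffding's lemma via the chord/convexity argument with the variance bound $\psi''(s)\le (b'-a')^2/4$, and the optimization $s^*=4t/(b-a)^2$ all check out and yield exactly $e^{-2nt^2/(b-a)^2}$. The paper itself gives no proof of this lemma---it simply cites Serfling (1980)---and the argument you give is precisely the standard one found there, so there is nothing to compare beyond noting agreement.
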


The detailed proof of the Hoeffding inequality was given in \cite{Ser1980}. Relying on it, we are able to show the following useful probability inequalities given in Lemmas \ref{lem:01}-\ref{lm-1107-2}.

\begin{lemma}\label{lem:01}
\label{lm-1107-1}
Let $v= F^{-1}(1/2)$ be the unique solution to \eqref{eq-1107-1}, then for any $\epsilon>0$, fixed integer $l$ and sufficiently large $n$, we have
\bes
    \P\ll(|\hat{v}^{*}_{n,l}-v|>\epsilon\rr)\leq 2e^{-\sqrt{2}n\delta^{2}_{\epsilon,n}},
\ens
where $\delta_{\epsilon,n}= \min\{a_{0,l}, b_{0,l}\}$ with
\be
\label{eq-1107-3}
    a_{0,l}: = a_{0}(\epsilon, l)= F\ll(v+\frac{\epsilon}{2}\rr) - \ll(\ll\lfloor\frac{n+l}{2}\rr\rfloor - 1\rr)\Big/n,
\en
\be
\label{eq-1107-4}
    b_{0,l}: = b_{0}(\epsilon,l)= \ll\lfloor\frac{n+l}{2}\rr\rfloor \Big/n - F\ll(v-\frac{\epsilon}{2}\rr).
\en
\end{lemma}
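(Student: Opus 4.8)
The plan is to condition on the original sample $X_1,\dots,X_n$, so that the bootstrap draws $X^{*}_1,\dots,X^{*}_n$ are i.i.d.\ from the \emph{fixed} distribution $F_n$, and then to control the deviation of $F_n$ from $F$ via the Glivenko--Cantelli theorem; the asserted bound is then read as a bound on $\P^{*}(\cdot)=\P(\cdot\mid X_1,\dots,X_n)$ that holds almost surely for all large $n$. Write $k=\lrd{\frac{n+l}{2}}$, so that $\hat v^{*}_{n,l}=X^{*}_{k:n}$, and split $\{|\hat v^{*}_{n,l}-v|>\epsilon\}\subseteq\{X^{*}_{k:n}>v+\epsilon\}\cup\{X^{*}_{k:n}<v-\epsilon\}$. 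The first step is to convert each order-statistic tail into a tail for a sum of conditionally i.i.d.\ Bernoulli variables: $\{X^{*}_{k:n}>v+\epsilon\}=\{\sum_{i=1}^{n}\mathbf 1\{X^{*}_i\le v+\epsilon\}\le k-1\}$ and $\{X^{*}_{k:n}<v-\epsilon\}=\{\sum_{i=1}^{n}\mathbf 1\{X^{*}_i< v-\epsilon\}\ge k\}$, where the indicators have conditional means $F_n(v+\epsilon)$ and $F_n((v-\epsilon)^{-})$, respectively.

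Next I would center each sum and invoke the Hoeffding inequality stated above, whose range factor is $b-a=1$ for $0$--$1$ indicators. For the upper tail, rewriting the event as $\sum_{i=1}^{n}\bigl(F_n(v+\epsilon)-\mathbf 1\{X^{*}_i\le v+\epsilon\}\bigr)\ge n\,t$ with $t=F_n(v+\epsilon)-(k-1)/n$ yields $\P^{*}(X^{*}_{k:n}>v+\epsilon)\le e^{-2nt^{2}}$; symmetrically, the lower tail is at most $e^{-2n t'^{2}}$ with $t'=k/n-F_n((v-\epsilon)^{-})$. The role of the $\epsilon/2$ in $a_{0,l}$ and $b_{0,l}$ now becomes transparent. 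Setting $\eta_n:=\sup_x|F_n(x)-F(x)|$ and using monotonicity of $F$, one has $F_n(v+\epsilon)\ge F(v+\epsilon)-\eta_n\ge F(v+\epsilon/2)-\eta_n$ and $F_n((v-\epsilon)^{-})\le F(v-\epsilon)+\eta_n\le F(v-\epsilon/2)+\eta_n$, so that $t\ge a_{0,l}-\eta_n$ and $t'\ge b_{0,l}-\eta_n$.

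Finally I would verify positivity and absorb the Glivenko--Cantelli slack. Because $v$ is the \emph{unique} solution of \eqref{eq-1107-1}, $F$ cannot be constant at level $1/2$ on either side of $v$, whence $F(v+\epsilon/2)>1/2>F(v-\epsilon/2)$; since $(k-1)/n\to 1/2$ and $k/n\to 1/2$, both $a_{0,l}$ and $b_{0,l}$ tend to strictly positive limits and hence are bounded below by a fixed positive constant for all large $n$. By Glivenko--Cantelli, $\eta_n\to 0$ a.s., so for $n$ large enough $\eta_n\le(1-2^{-1/4})\min\{a_{0,l},b_{0,l}\}$, which simultaneously keeps $t,t'>0$ (so that Hoeffding applies) and gives $2(a_{0,l}-\eta_n)^{2}\ge\sqrt2\,a_{0,l}^{2}$ and $2(b_{0,l}-\eta_n)^{2}\ge\sqrt2\,b_{0,l}^{2}$. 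Each tail is therefore at most $e^{-\sqrt2\,n\,\delta_{\epsilon,n}^{2}}$ with $\delta_{\epsilon,n}=\min\{a_{0,l},b_{0,l}\}$, and the union bound yields the claim. The main obstacle is not any single estimate but the bookkeeping at this last stage: getting the strict versus non-strict inequalities in the Bernoulli reduction exactly right, and choosing the slack constant so that the random error $\eta_n$ can be buried inside the exponent --- which is precisely why the stated rate carries the constant $\sqrt2$ rather than the sharper $2$ delivered by Hoeffding.
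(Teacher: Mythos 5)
Your proposal is correct and takes essentially the same route as the paper: condition on the sample, reduce the order-statistic tails to Bernoulli sums, apply Hoeffding with the conditional means $F_n(\cdot)$, use uniqueness of the median for positivity of $a_{0,l},b_{0,l}$, and absorb the Glivenko--Cantelli error into the exponent via the same $2^{-1/4}$ slack that turns Hoeffding's $2$ into the stated $\sqrt 2$. The only cosmetic difference is that you split at $v\pm\epsilon$ and pass to $v\pm\epsilon/2$ by monotonicity of $F$, whereas the paper bounds the (larger) events $\{\hat v^{*}_{n,l}>v+\epsilon/2\}$ and $\{\hat v^{*}_{n,l}<v-\epsilon/2\}$ directly.
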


\begin{proof}[Proof of Lemma \ref{lem:01}]
By Hoeffding's inequality, we have
\begin{eqnarray*}
    & &\P^{*}\ll(\hat{v}^{*}_{n,l} > v+\frac{\epsilon}{2}\rr)\\
    & &=\P^{*}\left(nF^{*}_{n}\ll(v+\frac{\epsilon}{2}\rr)\leq \ll\lfloor\frac{n+l}{2}\rr\rfloor-1\right)\\
    & &=\P^{*}\left(\sum_{i=1}^{n} I\ll(X^{*}_{i}>v+\frac{\epsilon}{2}\rr) \geq n - \ll(\ll\lfloor\frac{n+l}{2}\rr\rfloor-1\rr)\right)\\
    & &= \P^{*}\left(\sum_{i=1}^{n}I\ll(X^{*}_{i}>v+\frac{\epsilon}{2}\rr)- \sum_{i=1}^{n} \ll(1 - F_{n}\ll(v+\frac{\epsilon}{2}\rr)\rr)
         \geq nF_{n} \ll(v+\frac{\epsilon}{2}\rr) - \ll(\ll\lfloor\frac{n+l}{2}\rr\rfloor-1\rr)\right)\\
    & & \leq \exp\left\{-2n\left(F_{n}(v+\frac{\epsilon}{2}) - \frac{\ll\lfloor\frac{n+l}{2}\rr\rfloor-1}{n}\right)^{2}\right\}.
\end{eqnarray*}
By the Glivenko-Cantelli theorem, $F_{n}(v+\frac{\epsilon}{2})\rightarrow F(v+\frac{\epsilon}{2}), a.s.$ Thus for sufficiently large $n$,
\bes
    F_{n}\ll(v+\frac{\epsilon}{2}\rr)-\frac{\ll\lfloor\frac{n+l}{2}\rr\rfloor-1}{n} ~ > ~\left(F\ll(v+\frac{\epsilon}{2}\rr)-\frac{\ll\lfloor\frac{n+l}{2}\rr\rfloor-1}{n}\right)\Big/\sqrt[4]{2} ~ > ~0, \quad a.s.
\ens
by noting that $F(v+\frac{\epsilon}{2})>\frac12$ for any $\epsilon>0$. Hence
\bes
    \P(\hat{v}^{*}_{n,l} > v+\frac{\epsilon}{2}) =\E\left[\P^{*}(\hat{v}^{*}_{n,l} > v+\frac{\epsilon}{2})\right] \leq e^{-\sqrt{2}n a_{0}^2(\epsilon ,l)}.
\ens
Similar discussion leads to
\bes
    \P(\hat{v}^{*}_{n,l} < v-\frac{\epsilon}{2})
    =\E\left[\P^{*}\ll(\hat{v}^{*}_{n,l} < v - \frac{\epsilon}{2}\rr)\right]
    \leq e^{-\sqrt{2}n b_{0}^2(\epsilon ,l)}.
\ens
Then the conclusion follows.
\end{proof}

\begin{lemma}
\label{lm-1107-2}
Suppose $v= F^{-1}(1/2)$ be the unique solution to \eqref{eq-1107-1}, and
$\xi =G^{-1}(1/2)$ is the unique solution to \eqref{eq-1107-2}. For fixed
$l$, $m$ and any $\epsilon>0$, when $n$ is sufficiently large, we have
\bes
    \P(|\hat{\xi}^{*}_{n,m,l} - \xi| > \epsilon) \leq 6e^{-\sqrt{2}n\Delta^{2}_{\epsilon,n}},
\ens
where $\Delta_{\epsilon,n} = \min\{a_{0,l}, b_{0,l}, c_{0,m}, d_{0,m}\}$ with $a_{0,l}, b_{0,l}$
defined in \eqref{eq-1107-3} and \eqref{eq-1107-4}, and $c_{0,m}, d_{0,m}$ defined by
\begin{eqnarray*}
    c_{0,m} := c_{0}(\epsilon, m)= F\ll(v+\xi+\frac{\epsilon}{2}\rr) - F\ll(v-\xi-\frac{\epsilon}{2}\rr)
                     -(\ll\lfloor\frac{n+m}{2}\rr\rfloor-1) \Big/ n,\\
    d_{0,m} := d_{0}(\epsilon, m)= \ll\lfloor\frac{n+m}{2}\rr\rfloor \Big/ n - F\ll(v+\xi-\frac{\epsilon}{2}\rr)
                     +F\ll(v-\xi+\frac{\epsilon}{2}\rr).
\end{eqnarray*}
\end{lemma}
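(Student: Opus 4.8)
The plan is to mimic the proof of Lemma~\ref{lem:01}: I would reduce the event $\{|\hat{\xi}^{*}_{n,m,l}-\xi|>\epsilon\}$ to a statement about a sum of \emph{conditionally independent} indicators and then apply Hoeffding's inequality followed by the Glivenko--Cantelli theorem. The genuine obstacle, and the reason the result of \cite{Zuo2015} does not apply directly, is that $\hat{\xi}^{*}_{n,m,l}$ is a quantile of the deviations $W^{*}_{i,l}=|X^{*}_{i}-\hat{v}^{*}_{n,l}|$ whose common centring $\hat{v}^{*}_{n,l}$ is itself a function of all the bootstrap observations; hence the indicators $I(W^{*}_{i,l}\le y)$ are not conditionally independent and Hoeffding cannot be used on them as they stand. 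The whole argument is organised around removing this coupling.

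First I would split the two tails,
\[
\P(|\hat{\xi}^{*}_{n,m,l}-\xi|>\epsilon)\le \P(\hat{\xi}^{*}_{n,m,l}>\xi+\epsilon)+\P(\hat{\xi}^{*}_{n,m,l}<\xi-\epsilon),
\]
and treat each on the event that the bootstrap median is accurate. For the upper tail I would write
\[
\P(\hat{\xi}^{*}_{n,m,l}>\xi+\epsilon)\le \P\lra{\hat{\xi}^{*}_{n,m,l}>\xi+\epsilon,\; |\hat{v}^{*}_{n,l}-v|\le \tfrac{\epsilon}{2}}+\P\lra{|\hat{v}^{*}_{n,l}-v|> \tfrac{\epsilon}{2}},
\]
bounding the last summand by $2e^{-\sqrt{2}n\min\{a_{0,l},b_{0,l}\}^{2}}$ via Lemma~\ref{lem:01} (up to the harmless rescaling of $\epsilon$ the paper permits). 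Since the lower tail receives the same treatment, the median-accuracy event is charged twice and supplies the $2+2=4$ exponentials carrying $a_{0,l},b_{0,l}$.

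The key step is the first summand. On $\{|\hat{v}^{*}_{n,l}-v|\le \epsilon/2\}$ the triangle inequality yields $|X^{*}_{i}-v|\le \xi+\epsilon/2\Rightarrow |X^{*}_{i}-\hat{v}^{*}_{n,l}|\le \xi+\epsilon$, so at least $\#\{i:|X^{*}_{i}-v|\le \xi+\epsilon/2\}$ of the $W^{*}_{i,l}$ fall below $\xi+\epsilon$; as $\hat{\xi}^{*}_{n,m,l}>\xi+\epsilon$ forces fewer than $\lrd{\frac{n+m}{2}}$ of them to do so, I get
\[
\P\lra{\hat{\xi}^{*}_{n,m,l}>\xi+\epsilon,\; |\hat{v}^{*}_{n,l}-v|\le \tfrac{\epsilon}{2}}\le \P\lra{\sum_{i=1}^{n}I\lra{|X^{*}_{i}-v|\le \xi+\tfrac{\epsilon}{2}}\le \lrd{\tfrac{n+m}{2}}-1}.
\]
Now the centring $v$ is deterministic, so conditionally on $X_{1},\ldots,X_{n}$ the indicators are i.i.d.\ with mean $H_{n}(\xi+\epsilon/2):=\frac{1}{n}\#\{j:|X_{j}-v|\le \xi+\epsilon/2\}$, and Hoeffding applies exactly as in Lemma~\ref{lem:01}. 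The symmetric inequality $|X^{*}_{i}-\hat{v}^{*}_{n,l}|\le \xi-\epsilon\Rightarrow |X^{*}_{i}-v|\le \xi-\epsilon/2$ disposes of the lower tail through $\sum_{i}I(|X^{*}_{i}-v|\le \xi-\epsilon/2)$.

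Finally I would pass to the population constants as in Lemma~\ref{lem:01}. By Glivenko--Cantelli $H_{n}(\xi\pm\epsilon/2)\to G(\xi\pm\epsilon/2)$ a.s., and because $\xi$ is the unique median of $G$ one has $G(\xi+\epsilon/2)>\tfrac12>G(\xi-\epsilon/2)$; hence for all large $n$ the relevant gaps exceed $c_{0,m}/\sqrt[4]{2}$ and $d_{0,m}/\sqrt[4]{2}$, which after taking $\E[\P^{*}(\cdot)]$ gives the exponentials $e^{-\sqrt{2}nc_{0,m}^{2}}$ and $e^{-\sqrt{2}nd_{0,m}^{2}}$. Adding the four median terms to these two produces $6$ exponentials, each at most $e^{-\sqrt{2}n\Delta_{\epsilon,n}^{2}}$ since $\Delta_{\epsilon,n}=\min\{a_{0,l},b_{0,l},c_{0,m},d_{0,m}\}$, which is the claim. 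The only point needing care is the identification of $G(\xi\pm\epsilon/2)$ with $F(v+\xi\pm\epsilon/2)-F(v-\xi\mp\epsilon/2)$, which uses the continuity of $F$ at $v-\xi\mp\epsilon/2$ guaranteed by the differentiability hypothesis of this section.
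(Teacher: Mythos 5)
Your proposal is correct and follows essentially the same route as the paper: split into the two tails, intersect with the event $\{|\hat{v}^{*}_{n,l}-v|\le \epsilon/2\}$ (bounded via Lemma \ref{lem:01}, which indeed yields the $\epsilon/2$-deviation bound with the constants $a_{0,l},b_{0,l}$), use the triangle-inequality inclusion to replace the random centring $\hat{v}^{*}_{n,l}$ by $v$ so that Hoeffding applies to conditionally i.i.d.\ indicators, and then invoke Glivenko--Cantelli to replace $F_n$ by $F$ and collect the six exponential terms. Your closing remark about identifying $G(\xi\pm\epsilon/2)$ with the corresponding differences of $F$ is a fair point of care that the paper itself glosses over.
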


\begin{proof}[Proof of Lemma \ref{lm-1107-2}]
Denote by $G^{*}_{n}(y)$ the empirical distribution function of $W^{*}_{1l},W^{*}_{2l},\ldots, W^{*}_{nl}$.
Set $\alpha_{n}=(\ll\lfloor\frac{n+m}{2}\rr\rfloor-1)/n$, then we have
\begin{eqnarray*}
    \P^{*}\ll(\hat{\xi}^{*}_{n,m,l}>\xi+\epsilon\rr)
    &=&\P^{*}\ll(W^{*}_{\ll\lfloor\frac{n+m}{2}\rr \rfloor l:n}>\xi+\epsilon\rr)\\
    &=& \P^{*}\left(G^{*}_{n}(\xi+\epsilon)\leq \alpha_{n}\right)\\
    &\leq & \P^{*}\left(G^{*}_{n}(\xi+\epsilon)\leq \alpha_{n},~ |\hat{v}^{*}_{n,l} - v|\leq\frac{\epsilon}2\right)
        +\P^{*}\ll(|\hat{v}^{*}_{n,l}-v|>\frac{\epsilon}2\rr)\\
    &\leq & \P^{*}\left(\sum_{i=1}^nI\ll(v-\xi-\frac{\epsilon}2 \leq X_i^* \leq v+\xi+\frac{\epsilon}2\rr) \leq n \alpha_{n} \right)
        +\P^{*}\ll(|\hat{v}^{*}_{n,l}-v|>\frac{\epsilon}2\rr),
\end{eqnarray*}
where the last inequality follows from
\begin{eqnarray*}
&&\ll\{\sum_{i=1}^nI(\hat{v}^{*}_{n,l}-\xi-\epsilon\leq X_i^*\leq \hat{v}^{*}_{n,l}+\xi+\epsilon)\leq n\alpha_n,~v-\frac\epsilon2\leq\hat{v}^{*}_{n,l}\leq v+\frac\epsilon2\rr\}\\
&&\subset\ll\{\sum_{i=1}^nI\ll(v-\xi-\frac{\epsilon}2 \leq X_i^* \leq v+\xi+\frac{\epsilon}2\rr) \leq n \alpha_{n},~v-\frac\epsilon2\leq\hat{v}^{*}_{n,l}\leq v+\frac\epsilon2\rr\}.
\end{eqnarray*}
For the first part, by Hoeffding's inequality, we have
\begin{eqnarray*}
    \lefteqn{\P^{*}\left(\sum_{i=1}^nI\ll(v-\xi-\frac{\epsilon}2 \leq X_i^*\leq v+\xi+\frac{\epsilon}2\rr) \leq n\alpha_{n}\right)}\\
    =&& \P^{*}\left(\sum_{i=1}^{n}I\ll(v-\xi-\frac{\epsilon}2 \leq X_i^*\leq v+\xi+\frac{\epsilon}2\rr) - np_{n}
         \leq n(\alpha_{n}-p_{n1})\right)\\
    =&& \P^{*}\left(\sum_{i=1}^{n}(Y_{i}-\E^* Y_{i})\geq n(p_{n1}-\alpha_{n})\right)\\
    \leq && \exp\{-2n(p_{n1}-\alpha_{n})^{2}\},
\end{eqnarray*}
where $Y_{i}=1-I\ll(v-\xi-\frac{\epsilon}2 \leq X^{*}_{i}\leq v+\xi+\frac{\epsilon}2\rr)$, $i=1,\ldots,n$ and
$p_{n1}=F_{n}\ll(v+\xi+\frac{\epsilon}2\rr) -F_{n}\ll(v-\xi-\frac{\epsilon}2-\rr)$.
Note that $G(\xi+\frac{\epsilon}2)=F\ll(v+\xi+\frac{\epsilon}2\rr) -F\ll(v-\xi-\frac{\epsilon}2-\rr)$, the Glivenko-Cantelli theorem implies that
\bes
    p_{n1}\rightarrow F\ll(v+\xi+\frac{\epsilon}2\rr) -F\ll(v-\xi-\frac{\epsilon}2-\rr) > \frac{1}{2}, \quad a.s.
\ens
Then for sufficiently large $n$
\bes
    p_{n1}-\alpha_n>\frac{F(v+\xi+\frac{\epsilon}2) -F(v-\xi-\frac{\epsilon}2)-(\ll\lfloor\frac{n+m}{2}\rr\rfloor-1)/n}{\sqrt[4]{2}}>0,\quad a.s.
\ens
It follows from the discussion above and Lemma \ref{lm-1107-1} that
\begin{eqnarray*}
    \P\ll(\hat{\xi}^{*}_{n,m,l}>\xi+\epsilon\rr)
    &=& \E\left[\P^{*}\ll(\hat{\xi}^{*}_{n,m,l}>\xi+\epsilon\rr)\right]\\
    &\leq&  2e^{-\sqrt{2}n\delta^{2}_{\epsilon,n}}
          + \E \left[e^{-2n(p_{n}-\alpha_{n})^{2}}\right]\\
    &\leq&  2e^{-\sqrt{2}n\delta^{2}_{\epsilon,n}}
          + e^{-\sqrt{2}nc_{0}^2(\epsilon,m)}\\
    &\leq& 3e^{-\sqrt{2}n\Delta^{2}_{\epsilon,n}}.
\end{eqnarray*}
Set $\beta_{n}=\ll\lfloor\frac{n+m}{2}\rr\rfloor/n$, a similar argument leads to
\bes
    \P^{*}(\hat{\xi}^{*}_{n,m,l} < \xi - \epsilon)
    &\leq & \P^{*}\left(\sum_{i=1}^nI\ll(v-\xi+\frac{\epsilon}2 \leq X_i^*\leq v+\xi-\frac{\epsilon}2\rr) \geq n\beta_{n}\right)
        +\P^{*}\ll(|\hat v^{*}_{n,l}-v|>\frac{\epsilon}2 \rr)\\
    &\leq & \exp\{-2n(\beta_{n}-p_{n2})^{2}\}+2e^{-\sqrt{2}n\delta^{2}_{\epsilon,n}},
\ens
where $p_{n2}=F_{n}(v+\xi-\frac{\epsilon}2) -F_{n}(v-\xi+\frac{\epsilon}2-)$, and $\beta_{n}-p_{n2}>0$ for sufficiently large $n$.
Then we have
\begin{eqnarray*}
    \P(\hat{\xi}^{*}_{n,m,l} < \xi-\epsilon)
    = \E\left[\P^{*}(\hat{\xi}^{*}_{n,m,l} < \xi-\epsilon)\right]
    \leq 3e^{-\sqrt{2}n\Delta^{2}_{\epsilon,n}}.
\end{eqnarray*}
The conclusion has been proved.
\end{proof}

\begin{lemma}\label{lm-1107-4}
Let F be differentiable at v and $v\pm \xi$, with $F'(v)>0$ and $G'(\xi)=F'(v-\xi)+F'(v+\xi)>0$, then for any fixed $l \ge 1$ and $m \ge 1$, we have almost surely
\bes
    |(\hat{v}^{*}_{n,l}-\hat{\xi}^{*}_{n,m,l})-(v-\xi)|\leq D \frac{(\log n)^{1/2}}{n^{1/2}}
\ens
and
\bes
    |(\hat{v}^{*}_{n,l} + \hat{\xi}^{*}_{n,m,l})-(v+\xi)|\leq D \frac{(\log n)^{1/2}}{n^{1/2}}
\ens
for sufficiently large n, where $D=\max\{8/F'(v),8/G'(\xi)\}$.
\end{lemma}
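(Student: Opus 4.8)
The plan is to obtain both displayed inequalities from a single two-sided deviation bound applied separately to $\hat{v}^{*}_{n,l}$ and to $\hat{\xi}^{*}_{n,m,l}$, and to combine them at the end by the triangle inequality. Write $\mu=\min\{F'(v),G'(\xi)\}$, so that $D=8/\mu=\max\{8/F'(v),8/G'(\xi)\}$, and fix the common radius $\epsilon_n=\frac{D}{2}(\log n/n)^{1/2}=\frac{4}{\mu}(\log n/n)^{1/2}$. I would aim to show that, almost surely for all large $n$, the events $|\hat{v}^{*}_{n,l}-v|\le\epsilon_n$ and $|\hat{\xi}^{*}_{n,m,l}-\xi|\le\epsilon_n$ hold simultaneously. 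Granting this, the triangle inequality yields
\[ |(\hat{v}^{*}_{n,l}\pm\hat{\xi}^{*}_{n,m,l})-(v\pm\xi)|\le|\hat{v}^{*}_{n,l}-v|+|\hat{\xi}^{*}_{n,m,l}-\xi|\le 2\epsilon_n=D\,(\log n/n)^{1/2}, \]
which is exactly the two asserted bounds (the two sign choices give the two displays). The reason the final constant is the \emph{maximum} $\max\{8/F'(v),8/G'(\xi)\}$ rather than a sum is that one common radius $\epsilon_n$, tuned to the smaller density $\mu$, already controls both estimators, and the triangle inequality only contributes the factor $2$.

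To produce the two individual rates I would instantiate Lemmas \ref{lm-1107-1} and \ref{lm-1107-2} at $\epsilon=\epsilon_n$, giving $\P(|\hat{v}^{*}_{n,l}-v|>\epsilon_n)\le 2\exp(-\sqrt{2}\,n\,\delta^{2}_{\epsilon_n,n})$ and $\P(|\hat{\xi}^{*}_{n,m,l}-\xi|>\epsilon_n)\le 6\exp(-\sqrt{2}\,n\,\Delta^{2}_{\epsilon_n,n})$. The next step is to estimate the exponents by differentiability. Since $F(v)=1/2$ and $\lfloor(n+l)/2\rfloor/n=1/2+O(1/n)$ with $1/n=o(\epsilon_n)$, a first-order expansion gives $a_{0,l}(\epsilon_n),b_{0,l}(\epsilon_n)=\tfrac12 F'(v)\,\epsilon_n(1+o(1))$, and using $G'(\xi)=F'(v-\xi)+F'(v+\xi)$ one likewise obtains $c_{0,m}(\epsilon_n),d_{0,m}(\epsilon_n)=\tfrac12 G'(\xi)\,\epsilon_n(1+o(1))$. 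Hence $\delta_{\epsilon_n,n}$ and $\Delta_{\epsilon_n,n}$ are each at least $\tfrac{\mu}{2}\epsilon_n(1+o(1))$, so both $\sqrt{2}\,n\,\delta^{2}_{\epsilon_n,n}$ and $\sqrt{2}\,n\,\Delta^{2}_{\epsilon_n,n}$ are at least $\tfrac{\sqrt{2}\mu^{2}}{4}n\epsilon_n^{2}(1+o(1))=4\sqrt{2}\,\log n\,(1+o(1))$. Both tails are therefore $O(n^{-4\sqrt{2}})$, which is summable, and the Borel--Cantelli lemma delivers the two a.s. bounds for all large $n$; the triangle-inequality step above then finishes the proof.

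The step I expect to be the main obstacle is justifying the use of the exponential bounds of Lemmas \ref{lm-1107-1} and \ref{lm-1107-2} at the shrinking radius $\epsilon_n\to 0$: those lemmas are stated for a fixed $\epsilon$, and their proofs pass to the ``sufficiently large $n$'' regime through a Glivenko--Cantelli comparison of $F_n(v+\epsilon/2)$ with $F(v+\epsilon/2)$ at a fixed evaluation point. When $\epsilon=\epsilon_n$ that point $v+\epsilon_n/2$ drifts to $v$, so I would need the empirical fluctuation there to be negligible relative to the gap $F(v+\epsilon_n/2)-\tfrac12\asymp F'(v)\epsilon_n\asymp(\log n/n)^{1/2}$. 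I would handle this by replacing the plain Glivenko--Cantelli step with a uniform rate, e.g. the law of the iterated logarithm for the empirical process or a Dvoretzky--Kiefer--Wolfowitz bound on the original sample, which gives $\sup_x|F_n(x)-F(x)|=O((\log\log n/n)^{1/2})=o(\epsilon_n)$ almost surely; this keeps the factor-$\sqrt[4]{2}$ comparison valid at the moving point for all large $n$, so the exponents retain the form computed above and the remainder of the argument is routine.
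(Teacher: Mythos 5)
Your proposal is correct and follows essentially the same route as the paper: apply the exponential tail bounds of Lemmas \ref{lm-1107-1} and \ref{lm-1107-2} at a radius proportional to $(\log n/n)^{1/2}$, expand $a_{0,l},b_{0,l},c_{0,m},d_{0,m}$ to first order using $F(v)=1/2$ and $F(v+\xi)-F(v-\xi)=1/2$ to show the exponents are at least a constant times $\log n$, sum the tails, and finish with Borel--Cantelli plus the triangle inequality (the paper bundles that last step into the single bound $8\exp\{-\sqrt2 n\Delta^2_{\epsilon_n/2,n}\}$). The obstacle you flag at the end is genuine --- the paper invokes those lemmas at the $n$-dependent radius $\epsilon_n/2$ even though their ``sufficiently large $n$'' threshold depends on $\epsilon$ through a fixed-point Glivenko--Cantelli comparison --- and your proposed repair via the law of the iterated logarithm, which makes $\sup_x|F_n(x)-F(x)|=O((\log\log n/n)^{1/2})=o(\epsilon_n)$ almost surely, is the right way to keep the $\sqrt[4]{2}$ comparison valid at the drifting evaluation point.
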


\begin{proof}[Proof of Lemma \ref{lm-1107-4}]
Put $\epsilon_n=D \frac{(\log n)^{1/2}}{n^{1/2}}$. It follows from Lemma \ref{lm-1107-1} and Lemma \ref{lm-1107-2} that, for any fixed $l \ge 1$ and $m \ge 1$,
\bes
    \P(|(\hat{v}^{*}_{n,l} + \hat{\xi}^{*}_{n,m,l})-(v+\xi)| > \epsilon_n) \leq 8\exp\{-\sqrt 2n\Delta^2_{\epsilon_n/2, n}\}.
\ens
Since $F(v)=1/2$, we have
\begin{eqnarray*}
    a_0\ll(\frac{\epsilon_n}2, l\rr)
    &=& F\ll(v+\frac{\epsilon_n}4\rr)-\frac{\ll\lfloor\frac{n+l}{2}\rr\rfloor-1}n\\
    &=& F\ll(v+\frac{\epsilon_n}4\rr)- \frac12+O\ll(\frac1n\rr)\\
    &=& F\ll(v+\frac{\epsilon_n}4\rr)- F(v)+O\ll(\frac1n\rr)\\
    &=& \frac{F'(v)}4 \epsilon_n +o\ll(\epsilon_n\rr)+O\ll(\frac1n\rr)\\
    &>& \frac{(\log n)^{1/2}}{n^{1/2}}, \quad\text{for sufficiently large } n.
\end{eqnarray*}
Similarly
\bes
    b_0\ll(\frac{\epsilon_n}2, l\rr) > \frac{(\log n)^{1/2}}{n^{1/2}}, \quad\text{for sufficiently large } n.
\ens
By similar arguments using $F(v+\xi)-F(v-\xi)=1/2$, we also obtain
\bes
    c_0\ll(\frac{\epsilon_n}2, m\rr) > \frac{(\log n)^{1/2}}{n^{1/2}}, \quad\text{for sufficiently large } n
\ens
and
\bes
    d_0\ll(\frac{\epsilon_n}2, m\rr) > \frac{(\log n)^{1/2}}{n^{1/2}}, \quad\text{for sufficiently large }n.
\ens
The conclusion follows from the inequalities above and the Borel-Cantelli lemma.
\end{proof}

\begin{lemma}
\label{lm-1107-5}
Let F be differentiable at v and twice differentiable at $v\pm \xi$, with $F'(v)>0$ and $G'(\xi)=F'(v-\xi)+F'(v+\xi)>0$, then for any fixed $l \ge 1$ and $m \ge 1$, as
$n\to \infty$, we have
\bes
    H_{n1}=\ll|F_n^*(\hat{v}^{*}_{n,l}-\hat{\xi}^{*}_{n,m,l}) - F_n^*(v-\xi)- F(\hat{v}^{*}_{n,l}- \hat{\xi}^{*}_{n,m,l}) + F(v-\xi)\rr| = O\ll(n^{-3/4} \log n\rr), \quad a.s.
\ens
and
\bes
    H_{n2}=\ll|F_n^*(\hat{v}^{*}_{n,l} + \hat{\xi}^{*}_{n,m,l}) - F_n^*(v+\xi) - F(\hat{v}^{*}_{n,l} + \hat{\xi}^{*}_{n,m,l}) + F(v+\xi)\rr| = O\ll(n^{-3/4} \log n\rr), \quad a.s.
\ens
\end{lemma}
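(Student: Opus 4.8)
The plan is to establish the bound for $H_{n2}$ in full; the argument for $H_{n1}$ is identical after replacing $v+\xi$ by $v-\xi$ and $\hat{v}^{*}_{n,l}+\hat{\xi}^{*}_{n,m,l}$ by $\hat{v}^{*}_{n,l}-\hat{\xi}^{*}_{n,m,l}$. Write $c=v+\xi$ and set $\eta_n = D(\log n)^{1/2}/n^{1/2}$ with $D$ as in Lemma \ref{lm-1107-4}. The first step is to \emph{localize} the random argument: by Lemma \ref{lm-1107-4}, almost surely for all large $n$ the point $\hat{v}^{*}_{n,l}+\hat{\xi}^{*}_{n,m,l}$ lies in the deterministic shrinking interval $I_n=[c-\eta_n,\,c+\eta_n]$. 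On this event,
\[
H_{n2}\le \sup_{t\in I_n}\left|\big(F_n^*(t)-F_n^*(c)\big)-\big(F(t)-F(c)\big)\right|=:M_n,
\]
so it suffices to prove $M_n=O(n^{-3/4}\log n)$ a.s. The second step inserts $F_n$ to split $M_n$ into a bootstrap part and an original-sample part:
\[
M_n\le \underbrace{\sup_{t\in I_n}\left|\big(F_n^*(t)-F_n(t)\big)-\big(F_n^*(c)-F_n(c)\big)\right|}_{M_n^{(1)}}+\underbrace{\sup_{t\in I_n}\left|\big(F_n(t)-F(t)\big)-\big(F_n(c)-F(c)\big)\right|}_{M_n^{(2)}}.
\]

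The term $M_n^{(2)}$ is the oscillation of the ordinary empirical process over an interval of length $2\eta_n$. By the standard modulus-of-continuity estimate for the empirical process --- obtained from an exponential inequality applied over a grid of $I_n$ together with the Borel--Cantelli lemma, and using that $F_n$ has mass $\approx 2F'(c)\eta_n$ on $I_n$ --- it is $O\big(\sqrt{\eta_n\log n/n}\big)=O\big(n^{-3/4}(\log n)^{3/4}\big)$ a.s., comfortably within the claimed bound. (Here the twice-differentiability hypothesis gives local boundedness of $F'$ near $c$, which is what controls the mass of $I_n$.)

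The third and hardest step is the bootstrap oscillation $M_n^{(1)}$, which I would treat conditionally on $X_1,\dots,X_n$. Given the sample, $t\mapsto F_n^*(t)-F_n(t)$ is a step function changing only at the original data points, so the supremum over $I_n$ reduces to a maximum over the $K_n+1$ values at the $K_n:=\#\{i:X_i\in I_n\}$ points falling in $I_n$; a preliminary concentration bound on the original sample (again exponential inequality plus Borel--Cantelli) shows $K_n=O(n\eta_n)=O\big(n^{1/2}(\log n)^{1/2}\big)$ a.s. For each fixed $t\in I_n$ the increment $F_n^*(t)-F_n^*(c)$ is, conditionally, an average of $n$ i.i.d.\ Bernoulli indicators with success probability $p_n(t)=F_n(t)-F_n(c)$ and conditional variance $O(\eta_n/n)$. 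The crucial point --- and the main obstacle --- is that a \emph{variance-sensitive} exponential bound (a Bernstein/Bennett refinement of Hoeffding, equivalently the Chernoff--KL form of the binomial tail) is required: the crude Hoeffding bound $e^{-2ns^2}$ ignores the variance and, after the union bound, yields only the suboptimal rate $n^{-1/2}(\log n)^{1/2}$, whereas exploiting the small variance $O(\eta_n/n)$ gives, for $s=Cn^{-3/4}\log n$,
\[
\P^*\!\left(\left|\big(F_n^*(t)-F_n^*(c)\big)-p_n(t)\right|\ge s\right)\le 2\exp\left\{-\kappa(\log n)^{3/2}\right\}.
\]
A union bound over the $K_n+1$ candidate points, followed by taking expectations to remove the conditioning and a final Borel--Cantelli application, then gives $M_n^{(1)}=O(n^{-3/4}\log n)$ a.s.

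Combining the estimates for $M_n^{(1)}$ and $M_n^{(2)}$ proves the assertion for $H_{n2}$, and the symmetric argument disposes of $H_{n1}$. I expect the genuinely delicate point to be this third step: one must simultaneously control the two sources of randomness (the resampling and the original data) and extract the sharp $n^{-3/4}$ exponent, which forces the variance-sensitive inequality to be paired with the localization radius $\eta_n\asymp\sqrt{\log n/n}$ from Lemma \ref{lm-1107-4}, while the number $K_n$ of effective comparison points must itself be controlled almost surely, so the several a.s.\ estimates have to be assembled with care.
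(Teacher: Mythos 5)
Your argument is correct and arrives at the right bound, but at the decisive step it takes a more self-contained route than the paper. Both proofs begin identically: Lemma \ref{lm-1107-4} localizes the random argument $\hat{v}^{*}_{n,l}\pm\hat{\xi}^{*}_{n,m,l}$ to a deterministic window of radius $\eta_n\asymp\sqrt{\log n/n}$ around $v\pm\xi$, after which everything reduces to a uniform oscillation bound for $x\mapsto [F_n^*(x)-F_n^*(\theta_p)]-[F(x)-F(\theta_p)]$ over $|x-\theta_p|<a_n$ with $a_n=c\log n/\sqrt n$. At that point the paper simply cites Lemma 3.7 of Zuo (2015), which asserts exactly that this supremum is $O(n^{-3/4}\log n)$ a.s., and the proof is three lines. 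You instead re-derive that oscillation bound: you split it into the resampling fluctuation $F_n^*-F_n$ and the sampling fluctuation $F_n-F$, reduce the former to a maximum over the $O(\sqrt{n\log n})$ data points in the window, and correctly identify that a variance-sensitive (Bernstein/binomial-tail) inequality is indispensable --- plain Hoeffding gives $e^{-2ns^2}$ with $ns^2\to 0$ at the target scale $s\asymp n^{-3/4}\log n$, whereas exploiting the conditional variance $O(\eta_n/n)$ yields tails of order $\exp\{-\kappa(\log n)^{3/2}\}$, which survive the union bound and Borel--Cantelli. Your rates check out ($M_n^{(2)}=O(n^{-3/4}(\log n)^{3/4})$ and $M_n^{(1)}=O(n^{-3/4}\log n)$ a.s.), and your steps are essentially the internal content of the lemma the paper cites. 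The paper's route buys brevity and a modular dependence on the bootstrap-quantile literature; yours buys transparency about where the exponent $3/4$ comes from (the interplay between the localization radius $\sqrt{\log n/n}$ and the variance term in Bernstein's inequality), at the price of having to assemble several almost-sure events across the two layers of randomness. One small slip: the mass of $I_n$ is controlled in the first instance by $F$, not $F_n$ (differentiability of $F$ at $v\pm\xi$ already gives $F(I_n)=O(\eta_n)$, and the empirical mass then follows from the oscillation bound on $F_n-F$); this does not affect the argument.
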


\begin{proof}[Proof of Lemma \ref{lm-1107-5}]
Denote by $\theta_p$ the $p$-th quantile of $F$ for $p\in (0, 1)$. Let $a_n =\frac{c\log n}{n^{1/2}}$ for some positive constant $c$, and define
\bes
    H_{pn}(x):=[F^*_n(x)-F^*_n(\theta_p)]- [F(x)-F(\theta_p)].
\ens
It follows from Lemma 3.7 of \cite{Zuo2015} that
\bes
    \sup_{|x-\theta_p|<a_n}|H_{pn}(x)|=O\ll(n^{-3/4}\log n\rr), \text{as}\, n\to \infty, \quad a.s.
\ens
Let we express $v-\xi$ as the $p$-th quantile of $F$: $v-\xi=F^{-1}(p)=\theta_p$, and put $x_n=\hat{v}^{*}_{n,l} - \hat{\xi}^{*}_{n,m,l}$ for any fixed $l,m \ge 1$, then Lemma \ref{lm-1107-4} implies
\bes
    |x_n-\theta_p| \leq D \frac{(\log n)^{1/2}}{n^{1/2}} < a_n, \,\text{for sufficiently large }n.
\ens
Now we have
\bes
    H_{n1} \leq \sup_{|x-\theta_p|<a_n}|H_{pn}(x)|=O\ll(n^{-3/4}\log n\rr), \quad a.s.
\ens
Similarly we can obtain
\bes
    H_{n2} =O\ll(n^{-3/4}\log n\rr), \quad a.s.
\ens
The proof has been completed.
\end{proof}

\begin{lemma}
\label{lm-1107-3}
Suppose $v= F^{-1}(1/2)$ be the unique solution to \eqref{eq-1107-1}, and
$\xi =G^{-1}(1/2)$ is the unique solution to \eqref{eq-1107-2}. Then for any fixed $m \ge 1$, it holds almost surely that
\bes
    G_{n}^{*}\ll(\hat{\xi}^{*}_{n,m,l}\rr) = \frac{1}{2} + O\ll(\frac{\log n}{n}\rr), \quad n \rightarrow \infty.
\ens
\end{lemma}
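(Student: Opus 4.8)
The plan is to exploit the fact that $G_n^*(\hat\xi^*_{n,m,l})$ is \emph{not} an empirical process evaluated at a fixed point, but the empirical distribution function of the $W^*_{il}$ evaluated at one of its own order statistics; this is exactly what makes the sharp rate $O(\log n/n)$ available, rather than the generic $O(n^{-1/2})$ fluctuation rate. Write $k := \lrd{\frac{n+m}{2}}$, so that $\hat\xi^*_{n,m,l} = W^*_{kl:n}$ is the $k$-th smallest of $W^*_{1l},\dots,W^*_{nl}$, and $G_n^*$ is their empirical distribution function (as in the proof of Lemma \ref{lm-1107-2}). Splitting the count of observations at most $\hat\xi^*_{n,m,l}$ into those strictly below and those tied, I would record the two-sided bound
\[
\frac{k}{n} \;\le\; G_n^*(\hat\xi^*_{n,m,l}) \;=\; \frac1n\#\{i: W^*_{il} < \hat\xi^*_{n,m,l}\} + \frac{J_n}{n} \;\le\; \frac{k-1+J_n}{n},
\]
where $J_n := \#\{i : W^*_{il} = \hat\xi^*_{n,m,l}\}$ is the number of ties at the $k$-th order statistic. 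Since $m\ge1$ is fixed, $k/n = \tfrac12 + O(1/n)$ with $k/n \ge \tfrac12$, so the entire statement reduces to proving $J_n = O(\log n)$ almost surely.

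To bound $J_n$, I would first note that $\hat v^*_{n,l}=X^*_{\lrd{\frac{n+l}{2}}:n}$ is itself a resampled value, say $\hat v^*_{n,l}=X_c$ for some original observation. Putting $w:=\hat\xi^*_{n,m,l}$, a bootstrap point is tied exactly when $|X^*_i-X_c|=w$, i.e. when $X^*_i\in\{X_c-w,\,X_c+w\}$. Because $F$ is continuous the original observations are a.s.\ distinct, so at most two of them lie in this two-point set; moreover $w>0$ for large $n$ since $\hat\xi^*_{n,m,l}\to\xi>0$ by Lemma \ref{lm-1107-2}, so the degenerate value $w=0$ is irrelevant. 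Consequently $J_n \le 2M_n$, where $M_n := \max_{1\le j\le n}\#\{i: X^*_i = X_j\}$ is the maximal multiplicity of a single observation in the resample.

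It then remains to control $M_n$, which is a standard balls-into-bins quantity whose law does not depend on the data: conditionally on $X_1,\dots,X_n$, each count $N_j = \#\{i: X^*_i = X_j\}$ is $\mathrm{Binomial}(n,1/n)$. A crude union bound over which draws hit index $j$ gives $\P^*(N_j \ge t) \le \binom{n}{t}n^{-t} \le (e/t)^t$, and a further union over $j$ yields $\P(M_n \ge c\log n) \le n\,(e/(c\log n))^{c\log n}$. For $c>0$ fixed the exponent tends to $-\infty$ and the bound is summable in $n$, so the Borel--Cantelli lemma (applied jointly in the data and the resampling) gives $M_n = O(\log n)$ a.s. Combining $J_n\le 2M_n = O(\log n)$ with the displayed two-sided bound then delivers $G_n^*(\hat\xi^*_{n,m,l}) = \tfrac12 + O(\log n/n)$ a.s.

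The hard part is recognizing where the logarithmic rate comes from: the slack in pinning $G_n^*(\hat\xi^*_{n,m,l})$ to $k/n$ is precisely the tie multiplicity $J_n$ at the order statistic, and this multiplicity is governed solely by the resampling, since at most two source observations can feed a single value of $|X^*_i - X_c|$. The oscillation estimates in Lemmas \ref{lm-1107-4} and \ref{lm-1107-5} are genuinely not enough here, because evaluating $F_n^*$ at a fixed point only controls deviations at rate $O(n^{-1/2})$; the improvement to $O(\log n/n)$ is intrinsically a consequence of the order-statistic structure together with the maximal-load bound for $M_n$.
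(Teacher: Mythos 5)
Your proof is correct and follows essentially the same route as the paper: both identify $G_n^*(\hat\xi^*_{n,m,l})$ with $\lfloor(n+m)/2\rfloor/n$ up to the number of bootstrap points tied at the order statistic $\hat\xi^*_{n,m,l}$, i.e.\ the count of $X_i^*$ equal to $\hat v^*_{n,l}\pm\hat\xi^*_{n,m,l}$, and both bound that count by $O(\log n)$ almost surely. The only difference is that the paper cites Zuo (2015) for this $O(\log n)$ tie bound while you prove it directly via the Binomial$(n,1/n)$ cell-count union bound and Borel--Cantelli, which makes the argument self-contained; note that, like the paper, you implicitly rely on continuity of $F$ near $v\pm\xi$ (assumed where the lemma is applied, though not restated in the lemma) so that at most two distinct original observations can land in the two-point set $\{\hat v^*_{n,l}-w,\,\hat v^*_{n,l}+w\}$.
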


\begin{proof}[Proof of Lemma \ref{lm-1107-3}]
For convenience we set $l=m=1$. Recall that $G^{*}_{n}(y)$ is the empirical distribution function of $W^{*}_{1,l},W^{*}_{2,l},\ldots, W^{*}_{n,l}$. Since $\hat{\xi}^{*}_{n,1,1} = W^{*}_{\ll\lfloor\frac{n+1}{2}\rr\rfloor :n, 1}$, we have $G_{n}^{*}(\hat{\xi}^{*}_{n,1,1})=\ll\lfloor\frac{n+1}{2}\rr\rfloor/n$ unless there is a tie. If such a tie exists, we have some $X_{i}^{*}=\hat{v}^{*}_{n,1} \pm \hat{\xi}^{*}_{n,1,1}$. It follows from \cite{Zuo2015} that for large $n$
\bes
    \sum_{i=1}^{n}I\lra{X_{i}^{*} = \hat{v}^{*}_{n,1}\pm\hat{\xi}^{*}_{n,1,1}} < 2\log n, \quad a.s.
\ens
That is, we have for large $n$
\bes
    nG_{n}^{*}(\hat{\xi}^{*}_{n,1,1})\leq \ll\lfloor\frac{n+1}{2}\rr\rfloor+ 2\log n,\quad a.s.
\ens
Then almost surely
\bes
    G_{n}^{*}(\hat{\xi}^{*}_{n,1,1})=\frac{1}{2}+ O\lra{\frac{\log n}{n}}, \quad n\rightarrow \infty.
\ens
This completes the proof of this lemma.
\end{proof}

After proving Lemmas \ref{lem:01}-\ref{lm-1107-3}, we now are able to show the following theorem, which states the strong Bahadur representation for $\hat{\xi}^{*}_{n,m,l}$ for any fixed $l, m \ge 1$.

\begin{theorem}\label{th:01}
Suppose $F$ is continuous in neighborhoods of $v\pm\xi$ and twice differentiable at $v$ and $v\pm \xi$, with $F'(v)>0$ and $G'(\xi)=F'(v-\xi)+F'(v+\xi)>0$, then for any fixed $m \ge 1$,
\bes
    \hat{\xi}^{*}_{n,m,l} - \xi = \frac{\frac12-[F^*_n(v+\xi)-F^*_n(v-\xi)]}{G'(\xi)}+\frac{F'(v+\xi)-F'(v-\xi)}{G'(\xi)}\frac{\frac12-F^*_n(v)}{F'(v)}+R_{n1}
\ens
with
\bes
    R_{n1}=O(n^{-3/4}\log n), \quad a.s.
\ens
\end{theorem}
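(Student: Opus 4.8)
The plan is to convert the defining relation for $\hat{\xi}^{*}_{n,m,l}$ into a single linear equation in the two deviations $\hat{v}^{*}_{n,l}-v$ and $\hat{\xi}^{*}_{n,m,l}-\xi$, and then to eliminate the first of these using the Bahadur representation of the bootstrap median. I would begin from Lemma \ref{lm-1107-3}, which supplies $G^{*}_{n}(\hat{\xi}^{*}_{n,m,l})=\frac12+O(\log n/n)$ a.s., and rewrite the bootstrap empirical distribution of the absolute deviations as
\[ G^{*}_{n}(y)=F^{*}_{n}(\hat{v}^{*}_{n,l}+y)-F^{*}_{n}\big((\hat{v}^{*}_{n,l}-y)-\big), \]
which holds because $W^{*}_{i,l}\le y$ is equivalent to $\hat{v}^{*}_{n,l}-y\le X^{*}_{i}\le \hat{v}^{*}_{n,l}+y$. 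Evaluating at $y=\hat{\xi}^{*}_{n,m,l}$ then expresses $\frac12+O(\log n/n)$ through the values of $F^{*}_{n}$ at the two \emph{random} endpoints $\hat{v}^{*}_{n,l}\pm\hat{\xi}^{*}_{n,m,l}$; the discrepancy between $F^{*}_{n}$ and its left limit is absorbed into an $O(\log n/n)$ term by the same tie bound ($\sum_i I(X_i^{*}=\hat{v}^{*}_{n,l}\pm\hat{\xi}^{*}_{n,m,l})<2\log n$) already used in Lemma \ref{lm-1107-3}.

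The next step is linearization. Lemma \ref{lm-1107-5} allows me to transfer $F^{*}_{n}$ from the random endpoints to the fixed points $v\pm\xi$, at the cost of $O(n^{-3/4}\log n)$ a.s., via
\[ F^{*}_{n}(\hat{v}^{*}_{n,l}\pm\hat{\xi}^{*}_{n,m,l})=F^{*}_{n}(v\pm\xi)+F(\hat{v}^{*}_{n,l}\pm\hat{\xi}^{*}_{n,m,l})-F(v\pm\xi)+O(n^{-3/4}\log n). \]
A second-order Taylor expansion of $F$ about $v\pm\xi$ then handles the increments of $F$; its quadratic remainder is $O(\log n/n)$ because Lemma \ref{lm-1107-4} bounds the endpoint deviations by $D(\log n/n)^{1/2}$ and $F$ is twice differentiable there, and $O(\log n/n)=o(n^{-3/4}\log n)$ is swallowed by the remainder. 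Using $F(v+\xi)-F(v-\xi)=\frac12$ and $G'(\xi)=F'(v+\xi)+F'(v-\xi)$, this produces the linear relation
\[ \frac12-[F^{*}_{n}(v+\xi)-F^{*}_{n}(v-\xi)]=[F'(v+\xi)-F'(v-\xi)](\hat{v}^{*}_{n,l}-v)+G'(\xi)(\hat{\xi}^{*}_{n,m,l}-\xi)+O(n^{-3/4}\log n). \]

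Solving this equation for $\hat{\xi}^{*}_{n,m,l}-\xi$ isolates the leading term of the statement and leaves a multiple of the median deviation $\hat{v}^{*}_{n,l}-v$. The final step is to substitute the bootstrap median's own Bahadur representation, which follows from \cite{Zuo2015} applied at $p=1/2$,
\[ \hat{v}^{*}_{n,l}-v=\frac{\frac12-F^{*}_{n}(v)}{F'(v)}+O(n^{-3/4}\log n),\quad a.s., \]
and then to collect every error into $R_{n1}$: the $O(\log n/n)$ from Lemma \ref{lm-1107-3} and from the ties, the $O(\log n/n)$ Taylor remainder, and the two $O(n^{-3/4}\log n)$ contributions from Lemma \ref{lm-1107-5} and from the median representation. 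Since $n^{-3/4}\log n$ dominates, this yields $R_{n1}=O(n^{-3/4}\log n)$ a.s.

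I expect the main obstacle to be the coupling between center and scale: because the bootstrap absolute deviations are centered at the data-dependent $\hat{v}^{*}_{n,l}$ rather than at the fixed $v$, the bootstrap empirical process must be controlled at the \emph{random} endpoints $\hat{v}^{*}_{n,l}\pm\hat{\xi}^{*}_{n,m,l}$, and one must first certify via Lemma \ref{lm-1107-4} that these endpoints lie inside the shrinking $a_n$-neighborhood on which the oscillation bound of Lemma \ref{lm-1107-5} is valid before that bound may be invoked. Once this interplay is arranged the rest is bookkeeping; the one point demanding genuine care is the sign and magnitude of the coefficient $\big(F'(v+\xi)-F'(v-\xi)\big)/G'(\xi)$ multiplying $\hat{v}^{*}_{n,l}-v$, as this coefficient is precisely what transmits the fluctuation of the bootstrap median into the bootstrap scale estimate and thereby distinguishes this representation from the one for an ordinary bootstrap quantile.
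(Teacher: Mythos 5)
Your proposal is correct and follows essentially the same route as the paper's own proof: transfer $F^*_n$ from the random endpoints $\hat{v}^{*}_{n,l}\pm\hat{\xi}^{*}_{n,m,l}$ to $v\pm\xi$ via Lemma \ref{lm-1107-5}, linearize the $F$-increments by Taylor expansion with the $O((\log n/n)^{1/2})$ bound of Lemma \ref{lm-1107-4}, identify the resulting difference with $G^*_n(\hat{\xi}^{*}_{n,m,l})=\tfrac12+O(\log n/n)$ from Lemma \ref{lm-1107-3}, and substitute the bootstrap median's strong Bahadur representation from Theorem 3.9 of \cite{Zuo2015} at $p=1/2$. Your explicit handling of the left-limit/tie discrepancy in $G^*_n$ is a point the paper leaves implicit, but it does not change the argument.
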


\begin{proof}[Proof of Theorem \ref{th:01}]

It follows from Lemma \ref{lm-1107-5} that almost surely, for any fixed $l, m \ge 1$,
\bes
    F(\hat{v}^{*}_{n,l} + \hat{\xi}^{*}_{n,m,l}) - F(v+\xi) = F_n^*(\hat{v}^{*}_{n,l} + \hat{\xi}^{*}_{n,m,l}) - F_n^*(v+\xi)+O(n^{-3/4}\log n)
\ens
and
\bes
    F(\hat{v}^{*}_{n,l}-\hat{\xi}^{*}_{n,m,l})-F(v-\xi)=F_n^*(\hat{v}^{*}_{n,l} - \hat{\xi}^{*}_{n,m,l}) - F_n^*(v-\xi)+ O\ll(n^{-3/4}\log n\rr).
\ens
Taking the difference yields
\be
\label{eq-1107-5}
    &&F(\hat{v}^{*}_{n,l} + \hat{\xi}^{*}_{n,m,l}) - F(\hat{v}^{*}_{n,l} - \hat{\xi}^{*}_{n,m,l}) - F(v+\xi) + F(v-\xi) \\
    && = F_n^*(\hat{v}^{*}_{n,l} + \hat{\xi}^{*}_{n,m,l}) - F_n^*(\hat{v}^{*}_{n,l} - \hat{\xi}^{*}_{n,m,l})-F_n^*(v+\xi)+F_n^*(v-\xi) + O\ll(n^{-3/4}\log n\rr).\nonumber
\en
On the other hand, using Taylor expansion and Lemma \ref{lm-1107-4}, we have as $n \to \infty$
\bes
    F(\hat{v}^{*}_{n,l}+\hat{\xi}^{*}_{n,m,l})-F(v+\xi)=F'(v+\xi)(\hat{v}^{*}_{n,l}+\hat{\xi}^{*}_{n,m,l}-v-\xi)+O\ll(\frac{\log n}n \rr)
\ens
and
\bes
    F(\hat{v}^{*}_{n,l}-\hat{\xi}^{*}_{n,m,l})-F(v-\xi)=F'(v-\xi)(\hat{v}^{*}_{n,l}-\hat{\xi}^{*}_{n,m,l}-v+\xi)+O\ll(\frac{\log n}n \rr),
\ens
which implies
\be
\label{eq-1107-6}
    &&F(\hat{v}^{*}_{n,l}+\hat{\xi}^{*}_{n,m,l})-F(\hat{v}^{*}_{n,l}-\hat{\xi}^{*}_{n,m,l})-F(v+\xi)+F(v-\xi)\\
    &&= F'(v+\xi)(\hat{v}^{*}_{n,l}+\hat{\xi}^{*}_{n,m,l}-v-\xi)-F'(v-\xi)(\hat{v}^{*}_{n,l}-\hat{\xi}^{*}_{n,m,l}-v+\xi)+O\ll(\frac{\log n}n\rr)\nonumber.
\en
Combining (\ref{eq-1107-5}) and (\ref{eq-1107-6}) yields
\begin{eqnarray*}
    &&F'(v+\xi)(\hat{v}^{*}_{n,l}+\hat{\xi}^{*}_{n,m,l}-v-\xi)-F'(v-\xi)(\hat{v}^{*}_{n,l}-\hat{\xi}^{*}_{n,m,l}-v+\xi) \\
    && = F_n^*(\hat{v}^{*}_{n,l}+\hat{\xi}^{*}_{n,m,l})-F_n^*(\hat{v}^{*}_{n,l}-\hat{\xi}^{*}_{n,m,l})-F_n^*(v+\xi)+F_n^*(v-\xi)+O(n^{-3/4}\log n)\\
    &&= G_n^*(\hat{\xi}^{*}_{n,m,l})-[F_n^*(v+\xi)-F_n^*(v-\xi)]+O(n^{-3/4}\log n)\\
    &&= \frac12-[F_n^*(v+\xi)-F_n^*(v-\xi)]+O(n^{-3/4}\log n),
\end{eqnarray*}
where the last equality follows from Lemma \ref{lm-1107-3}. By noting that
\begin{eqnarray*}
    &&F'(v+\xi)(\hat{v}^{*}_{n,l}+\hat{\xi}^{*}_{n,m,l}-v-\xi)-F'(v-\xi)(\hat{v}^{*}_{n,l}-\hat{\xi}^{*}_{n,m,l}-v+\xi) \\
    &&= [F'(v+\xi)-F'(v-\xi)](\hat{v}^{*}_{n,l}- v) + G'(\xi)(\hat{\xi}^{*}_{n,m,l}-\xi),
\end{eqnarray*}
we have
\be
\label{eq-1107-07}
    \hat{\xi}^{*}_{n,m,l}-\xi &=& \frac{\frac12-[F^*_n(v+\xi)-F^*_n(v-\xi)]}{G'(\xi)}\\
    && + \frac{F'(v+\xi)-F'(v-\xi)}{G'(\xi)}(\hat{v}^{*}_{n,l}- v)
    +O(n^{-3/4}\log n).\nonumber
\en
Finally, taking $p=\frac12$ in Theorem 3.9 of \cite{Zuo2015} yields
\be
\label{eq-1107-08}
    \hat{v}^{*}_{n,l}= v+\frac{\frac12-F^*_n(v)}{F'(v)}+O(n^{-3/4}\log n).
\en
The proof is now completed by inserting (\ref{eq-1107-08}) into (\ref{eq-1107-07}).
\end{proof}

Since Theorem \ref{th:01} holds for any fixed $l, m \ge 1$, its result is quite general. Following a similar fashion to this theorem, it is easy to check the following theorem, which states the strong Bahadur representation for MAD$_n^*$.

\begin{theorem}\label{th:add1}
Under the conditions of Theorem \ref{th:01}, we have as $n\to\infty$
\begin{eqnarray*}
\text{MAD}_n^* - \xi &=& \frac{\frac12-[F^*_n(v+\xi)-F^*_n(v-\xi)]}{G'(\xi)}\\
&& +\frac{F'(v+\xi)-F'(v-\xi)}{G'(\xi)}\frac{\frac12-F^*_n(v)}{F'(v)}+O(n^{-3/4}\log n),~a.s.\end{eqnarray*}
\end{theorem}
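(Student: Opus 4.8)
The plan is to deduce Theorem \ref{th:add1} from Theorem \ref{th:01} rather than rerun the whole argument, exploiting the fact that the representation in Theorem \ref{th:01} holds for \emph{every} fixed pair $l,m\ge 1$ and that its right-hand side is \emph{the same} for all such pairs. The only features distinguishing $\text{MAD}_n^*$ from the quantities $\hat{\xi}^{*}_{n,m,l}$ already handled are: (i) the deviations defining $\text{MAD}_n^*$ are centered at $\text{Med}_n^*=(\hat{v}^{*}_{n,1}+\hat{v}^{*}_{n,2})/2$ rather than at a single order statistic $\hat{v}^{*}_{n,l}$, and (ii) $\text{MAD}_n^*$ averages the two central order statistics of those deviations. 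I would show that both features perturb $\text{MAD}_n^*$ by only $O(n^{-3/4}\log n)$.

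First I would compare the two centerings with $l=1$ fixed. Writing $W_i^*=|X_i^*-\text{Med}_n^*|$ and $W_{i,1}^*=|X_i^*-\hat{v}^{*}_{n,1}|$, the triangle inequality gives $|W_i^*-W_{i,1}^*|\le|\text{Med}_n^*-\hat{v}^{*}_{n,1}|=\tfrac12|\hat{v}^{*}_{n,2}-\hat{v}^{*}_{n,1}|=:\eta_n$ uniformly in $i$. Applying \eqref{eq-1107-08} (Theorem 3.9 of \cite{Zuo2015}) to the two fixed indices $l=1$ and $l=2$, both $\hat{v}^{*}_{n,1}$ and $\hat{v}^{*}_{n,2}$ share the identical leading Bahadur term $(\tfrac12-F_n^*(v))/F'(v)$, so their difference, and hence $\eta_n$, is $O(n^{-3/4}\log n)$ a.s.

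Next I would transfer the uniform closeness of the deviations to their order statistics: since $|W_i^*-W_{i,1}^*|\le\eta_n$ for all $i$, the $j$-th order statistics satisfy $|W_{j:n}^*-W_{j:n,1}^*|\le\eta_n$ for every rank $j$ (a standard monotone-rearrangement estimate). Averaging the ranks $j=\lfloor(n+1)/2\rfloor$ and $j=\lfloor(n+2)/2\rfloor$, and recalling $\hat{\xi}^{*}_{n,1,1}=W_{\lfloor(n+1)/2\rfloor:n,1}^*$ and $\hat{\xi}^{*}_{n,2,1}=W_{\lfloor(n+2)/2\rfloor:n,1}^*$, this yields
\[
    \text{MAD}_n^* = \frac{\hat{\xi}^{*}_{n,1,1}+\hat{\xi}^{*}_{n,2,1}}{2} + O(n^{-3/4}\log n), \quad a.s.
\]
Then I would invoke Theorem \ref{th:01} with $(m,l)=(1,1)$ and $(m,l)=(2,1)$: both $\hat{\xi}^{*}_{n,1,1}$ and $\hat{\xi}^{*}_{n,2,1}$ equal $\xi$ plus the \emph{same} linear functional of $F_n^*$ up to $O(n^{-3/4}\log n)$, so their average inherits that functional, and combining with the display above gives exactly the claimed expansion.

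The only place where genuine work hides, and hence the main obstacle, is the rate $\eta_n=O(n^{-3/4}\log n)$ for the spacing $\hat{v}^{*}_{n,2}-\hat{v}^{*}_{n,1}$ between the two central bootstrap order statistics; this is precisely where the uniformity of \eqref{eq-1107-08} over fixed $l$ is indispensable. Were this clean cancellation unavailable, one would instead have to reprove the probability inequalities of Lemmas \ref{lm-1107-1}--\ref{lm-1107-2}, Lemma \ref{lm-1107-4}, and the tie bound of Lemma \ref{lm-1107-3} with $\text{Med}_n^*$ playing the role of the center, which is the more laborious route that the phrase ``following a similar fashion'' alludes to.
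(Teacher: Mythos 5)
Your proof is correct, but it takes a genuinely different route from the paper's. The paper proves Theorem \ref{th:add1} by re-running the whole machinery with the new center: it notes $\text{Med}_n^*=(\hat{v}^{*}_{n,1}+\hat{v}^{*}_{n,2})/2$, asserts that Lemmas \ref{lem:01}--\ref{lm-1107-3} remain valid with $\hat{v}^{*}_{n,l}$ and $\hat{\xi}^{*}_{n,m,l}$ replaced by $\text{Med}_n^*$ and $\widetilde{\xi}^{*}_{n,m}=\widetilde W^{*}_{\lfloor\frac{n+m}{2}\rfloor:n}$ (the order statistics of $|X_i^*-\text{Med}_n^*|$), repeats the proof of Theorem \ref{th:01} to get the representation for $\widetilde{\xi}^{*}_{n,m}$, and finally averages over $m=1,2$. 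You instead treat Theorem \ref{th:01} as a black box and control the change of center by a perturbation argument: the reverse triangle inequality gives $|W_i^*-W^*_{i,1}|\le\eta_n=\tfrac12|\hat{v}^{*}_{n,2}-\hat{v}^{*}_{n,1}|$ uniformly in $i$, the two applications of \eqref{eq-1107-08} force $\eta_n=O(n^{-3/4}\log n)$ a.s.\ by cancellation of the common leading term, and the componentwise bound passes to order statistics of every rank. This is sound (the order-statistic comparison is the standard fact that $W^*_{j:n}=\min_{|S|=j}\max_{i\in S}W_i^*$, so a uniform $\eta_n$-perturbation of the sample moves each order statistic by at most $\eta_n$), and it uses nothing beyond the hypotheses of Theorem \ref{th:01}. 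Your route is more modular and makes explicit exactly what must be controlled --- the spacing of the two central bootstrap order statistics --- whereas the paper's route avoids that spacing estimate at the cost of re-verifying (or at least re-asserting) every intermediate lemma with $\text{Med}_n^*$ as the center; your closing remark correctly identifies that as the alternative the paper actually follows.
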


\begin{proof}[Proof of Theorem \ref{th:add1}]
Observe that Med$_n^* = (\hat{v}^{*}_{n,1} + \hat{v}^{*}_{n,2}) / 2$, it is easy to verify that the result of Lemma \ref{lem:01} also holds for Med$_n^*$.
Let $\widetilde{\xi}^*_{n,m}= \widetilde W^*_{\ll\lfloor\frac{n+m}{2}\rr\rfloor:n}$, where $\widetilde W^*_{1:n}\leq\ldots\leq \widetilde W^*_{n:n}$ are the ordered statistics of $\widetilde W_i^*=|X_i^*-\text{Med}_n^*|$, $1\leq i\leq n$. Then by the same arguments, the results of Lemma \ref{lm-1107-2}-Lemma \ref{lm-1107-3} still hold with
$\hat{v}^{*}_{n,l}$ and $\hat\xi^*_{n,m,l}$ replaced by Med$_n^*$ and $\widetilde{\xi}^*_{n,m}$, respectively. Following the proof of Theorem \ref{th:01}, we have
\be
\widetilde{\xi}^{*}_{n,m}-\xi &=& \frac{\frac12-[F^*_n(v+\xi)-F^*_n(v-\xi)]}{G'(\xi)}\nonumber\\
    &&  +\frac{F'(v+\xi)-F'(v-\xi)}{G'(\xi)}\frac{\frac12-F^*_n(v)}{F'(v)}+O(n^{-3/4}\log n),~a.s. \nonumber
\en
Hence the conclusion follows by noting that MAD$_n^* = (\widetilde{\xi}^{*}_{n,1} + \widetilde{\xi}^{*}_{n,2}) / 2$.
\end{proof}

\section{Weak Bahadur representation for the bootstrap MAD}\label{Sec:weak}
\paragraph{}

The strong Bahabar representation is somewhat too strong. In statistics, deriving the weak Bahadur representation may suffice for many practical applications, such as deriving the limit distribution. Hence, in this section, we also consider the weak Bahadur representation of the bootstrap MAD under weaker conditions than Section \ref{Sec:strong}.

To achieve this, we first present some useful preliminary lemmas as follows.

\begin{lemma}
\label{lm-1107-6}
Let F be differentiable at v and $v\pm \xi$, with $F'(v)>0$ and $G'(\xi)=F'(v-\xi)+F'(v+\xi)>0$, then for any fixed $l,m \ge 1$, we have as $n\to\infty$
\bes
    |(\hat{v}^{*}_{n,l}-\hat{\xi}^{*}_{n,m,l})-(v-\xi)|=O_p(n^{-1/2})\quad \text{and} \quad |(\hat{v}^{*}_{n,l}+\hat{\xi}^{*}_{n,m,l})-(v+\xi)|=O_p(n^{-1/2}).
\ens
\end{lemma}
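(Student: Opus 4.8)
The plan is to reuse the exponential probability inequalities of Lemmas~\ref{lm-1107-1} and \ref{lm-1107-2}, but now at the shrinking radius $\epsilon_n = Kn^{-1/2}$, and to deduce tightness by showing that the resulting tail bound can be driven below any prescribed level through the choice of the constant $K$. By the triangle inequality,
\[
\P\bigl(|(\hat{v}^{*}_{n,l}+\hat{\xi}^{*}_{n,m,l})-(v+\xi)|>\epsilon_n\bigr)\le \P\bigl(|\hat{v}^{*}_{n,l}-v|>\tfrac{\epsilon_n}{2}\bigr)+\P\bigl(|\hat{\xi}^{*}_{n,m,l}-\xi|>\tfrac{\epsilon_n}{2}\bigr),
\]
and exactly the same decomposition bounds the deviation of the difference $\hat{v}^{*}_{n,l}-\hat{\xi}^{*}_{n,m,l}$ from $v-\xi$. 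Applying Lemma~\ref{lm-1107-1} and Lemma~\ref{lm-1107-2} with $\epsilon$ replaced by $\epsilon_n/2$, and using $\Delta_{\epsilon,n}\le\delta_{\epsilon,n}$, bounds the right-hand side by $8\exp\{-\sqrt2\,n\Delta^{2}_{\epsilon_n/2,n}\}$.

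Next I would analyze the exponent using only first-order differentiability. Since $F(v)=1/2$ and $(\lfloor(n+l)/2\rfloor-1)/n=1/2+O(1/n)$, a Taylor expansion gives
\[
a_{0}\bigl(\tfrac{\epsilon_n}{2},l\bigr)=F\bigl(v+\tfrac{\epsilon_n}{4}\bigr)-\tfrac12+O\bigl(\tfrac1n\bigr)=\tfrac{F'(v)}{4}\epsilon_n+o(\epsilon_n)+O\bigl(\tfrac1n\bigr),
\]
and similarly for $b_{0}(\epsilon_n/2,l)$; the companion expansions for $c_{0}(\epsilon_n/2,m)$ and $d_{0}(\epsilon_n/2,m)$ rely on $G(\xi)=F(v+\xi)-F(v-\xi)=1/2$ and have leading term $\tfrac{G'(\xi)}{4}\epsilon_n$. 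Because $1/n=o(n^{-1/2})=o(\epsilon_n)$, each of the four ingredients is positive and asymptotically a positive constant multiple of $\epsilon_n=Kn^{-1/2}$, whence there is a constant $c>0$ depending only on $F'(v)$ and $G'(\xi)$ with $n\Delta^{2}_{\epsilon_n/2,n}\ge cK^{2}$ for all sufficiently large $n$. Consequently, for every fixed $K$ and all large $n$,
\[
\P\bigl(|(\hat{v}^{*}_{n,l}+\hat{\xi}^{*}_{n,m,l})-(v+\xi)|>Kn^{-1/2}\bigr)\le 8\exp\{-\sqrt2\,cK^{2}\}.
\]

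Finally I would let $K\to\infty$: the right-hand side above does not depend on $n$ and tends to $0$, so for any $\eta>0$ one fixes $K$ making it smaller than $\eta$, which is precisely $|(\hat{v}^{*}_{n,l}+\hat{\xi}^{*}_{n,m,l})-(v+\xi)|=O_p(n^{-1/2})$; the difference is handled identically. The main obstacle is the bookkeeping in the second step: one must check that all four exponent ingredients $a_{0},b_{0},c_{0},d_{0}$ evaluated at radius $\epsilon_n/2$ remain strictly positive and of exact order $n^{-1/2}$ using only first-order differentiability. This is exactly where the weakening of the hypotheses enters—the Taylor remainder is now merely $o(\epsilon_n)$ instead of $O(\epsilon_n^{2})$, but since $K$ is held fixed while $n\to\infty$ we have $\epsilon_n\to0$, so that remainder is negligible against the linear leading term, and the floor-induced $O(1/n)$ error is negligible against $n^{-1/2}$. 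No Borel--Cantelli step is required, which is what distinguishes this weak statement from Lemma~\ref{lm-1107-4}.
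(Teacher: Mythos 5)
Your proposal is correct and follows essentially the same route as the paper: apply the exponential tail bounds of Lemmas \ref{lm-1107-1}--\ref{lm-1107-2} at radius of order $n^{-1/2}$, expand $a_0,b_0,c_0,d_0$ to first order so that the exponent is bounded below by a constant multiple of $K^2$, and make the tail small by choice of the constant. The only cosmetic difference is that the paper fixes the constant up front as $\epsilon_n = DM n^{-1/2}$ with $M$ chosen from the target probability level, whereas you let $K\to\infty$ at the end; the substance is identical.
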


\begin{proof}[Proof of Lemma \ref{lm-1107-6}]
For any $\epsilon>0$, let $M > \sqrt {\log (1/\epsilon)}/\sqrt[4]{2}$. Put $\epsilon_n= D\frac{M}{n^{1/2}}$, where the constant $D$ is defined in Lemma \ref{lm-1107-4}. It can be seen from Lemmas \ref{lm-1107-1}-\ref{lm-1107-2} that
\bes
    \P(|(\hat{v}^{*}_{n,l}+\hat{\xi}^{*}_{n,m,l})-(v+\xi)|>\epsilon_n)\leq 8\exp\{-\sqrt 2 n\Delta^2_{\epsilon_n/2, n}\}.
\ens
Similar to Lemma \ref{lm-1107-4}, we have
\begin{eqnarray*}
    a_0\ll(\frac{\epsilon_n}2, l\rr)
    &=& F\ll(v+\frac{\epsilon_n}4\rr)-\frac{\ll\lfloor\frac{n+l}{2}\rr\rfloor-1}n\\
    &=& F\ll(v+\frac{\epsilon_n}4\rr)- F(v)+O\ll(\frac1n\rr)\\
    &=& \frac{F'(v)}4 \epsilon_n +o(\epsilon_n)+O\ll(\frac1n\rr)\\
    &>& \frac{M}{n^{1/2}}, \quad\text{for all sufficiently large } n.
\end{eqnarray*}
The same results hold for $b_0(\frac{\epsilon_n}2, l),c_0(\frac{\epsilon_n}2, m)$ and $d_0(\frac{\epsilon_n}2 ,m)$.
Now we have
\bes
    \sqrt 2 n\Delta^2_{\epsilon_n/2, n} \geq \sqrt 2 M^2\quad\text{for all sufficiently large } n,
\ens
whence for $n$ large enough
\bes
    \P(n^{1/2}|(\hat{v}^{*}_{n, l}+\hat{\xi}^{*}_{n,m,l})-(v+\xi)|>DM) \leq e^{-\sqrt2 M^2}<\epsilon,
\ens
which implies
\bes
    |(\hat{v}^{*}_{n, l}+\hat{\xi}^{*}_{n, m,l})-(v+\xi)|=O_p(n^{-1/2}).
\ens
The rest part can be proved by using the same steps.
\end{proof}

\begin{lemma}(Ghosh, 1971)
\label{lm-1107-7}
Let $\{U_n\}$ and $\{V_n\}$ be sequences of random variables on some probability space $(\Omega, \mathcal{F}, \P)$.
Suppose that (a) $V_n=O_p(1), n\to \infty$, and (b) For all $t$ and all $\epsilon>0$,
\begin{equation}
\begin{split}
\lim_{n\to\infty}\P(U_n\geq t+\epsilon, V_n\leq t)=0\\
\lim_{n\to\infty}\P(U_n\leq t, V_n\geq t+\epsilon)=0.
\end{split}
\end{equation}
Then $U_n-V_n=o_p(1)$, $n\to\infty$.
\end{lemma}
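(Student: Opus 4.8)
The plan is to prove the two one-sided statements $\P(U_n - V_n \ge \delta)\to 0$ and $\P(U_n - V_n \le -\delta)\to 0$ for every fixed $\delta>0$; together they give $\P(|U_n-V_n|\ge\delta)\to 0$, which is exactly $U_n-V_n=o_p(1)$. Since the two lines of condition (b) are mirror images, it suffices to treat the first bound, say $\P(U_n \ge V_n + \delta)\to 0$, and the second follows by applying the identical discretization argument to the second inequality in (b).

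The essential difficulty is that condition (b) is only a \emph{pointwise} statement: for each \emph{fixed} threshold $t$ the probability $\P(U_n\ge t+\epsilon, V_n\le t)$ vanishes, whereas the event $\{U_n\ge V_n+\delta\}$ features the \emph{random} threshold $V_n$. The idea is to use assumption (a) to turn this into a finite problem. First I would fix $\eta>0$ and invoke $V_n=O_p(1)$ to choose $K>0$ with $\P(|V_n|>K)<\eta$ for all large $n$; this discards the event $\{|V_n|>K\}$ at the cost of $\eta$, which is sent to $0$ at the end.

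On the remaining event $\{|V_n|\le K\}$ I would lay down a finite grid $-K=t_0<t_1<\cdots<t_J=K$ with mesh $t_j-t_{j-1}<\delta/2$, and bound
\bes
\P\big(U_n \ge V_n + \delta,\ |V_n| \le K\big) \le \sum_{j=1}^{J} \P\big(U_n \ge V_n + \delta,\ t_{j-1} \le V_n \le t_j\big).
\ens
The crux is the deterministic inclusion $\{U_n\ge V_n+\delta,\ t_{j-1}\le V_n\le t_j\}\subseteq\{U_n\ge t_j+\tfrac{\delta}{2},\ V_n\le t_j\}$: on the left-hand event one has $V_n\ge t_{j-1}>t_j-\delta/2$, so $U_n\ge V_n+\delta>t_j+\delta/2$, while $V_n\le t_j$. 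Each summand is therefore dominated by $\P(U_n\ge t_j+\delta/2,\ V_n\le t_j)$, which tends to $0$ by condition (b) with $t=t_j$ and $\epsilon=\delta/2$. As this is a finite sum of $J$ terms, the whole sum vanishes as $n\to\infty$.

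Putting the pieces together gives $\limsup_n\P(U_n\ge V_n+\delta)\le\eta$, and letting $\eta\downarrow 0$ yields $\P(U_n\ge V_n+\delta)\to 0$; the symmetric argument applied to the second inequality of (b) gives $\P(U_n\le V_n-\delta)\to 0$, whence the conclusion. I expect the only real obstacle to be precisely this passage from the pointwise hypothesis to a uniform control: tightness of $V_n$ supplies the compact range, and the finite grid with mesh tied to $\delta$ converts the random threshold into finitely many fixed thresholds. This is the single place where all three ingredients---condition (a), condition (b), and the balancing choice $\epsilon=\delta/2$ against the mesh---must be reconciled.
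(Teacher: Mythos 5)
Your argument is correct, but there is nothing in the paper to compare it against: the authors state this result as a quotation of Ghosh (1971) and supply no proof of their own, using it only as a black box in the proof of Lemma 8. Your proof is the standard one for Ghosh's lemma, and all the pieces fit: the tightness in (a) supplies the compact range $[-K,K]$ up to an $\eta$ of probability, the mesh-$\delta/2$ grid turns the random threshold $V_n$ into finitely many fixed thresholds, and the inclusion $\{U_n\ge V_n+\delta,\ t_{j-1}\le V_n\le t_j\}\subseteq\{U_n\ge t_j+\delta/2,\ V_n\le t_j\}$ is exactly what lets condition (b) with $t=t_j$ and $\epsilon=\delta/2$ kill each of the finitely many summands. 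The symmetric treatment of $\P(U_n\le V_n-\delta)$ via the second line of (b) (with $t=t_{j-1}-\delta/2$) goes through in the same way, so the proposal is a complete and correct proof of the cited lemma.
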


\begin{lemma}
\label{lm-1107-8}
Let F be continuous in the neighborhoods of $v\pm \xi$, and differentiable at $v$ and $v\pm \xi$, with $F'(v)>0$ and $G'(\xi)=F'(v-\xi)+F'(v+\xi)>0$, then as $n\to \infty$, we have for any fixed $l, m \ge 1$
\bes
    H_{n1}=|F_n^*(\hat{v}^{*}_{n,l}-\hat{\xi}^{*}_{n,m,l}) - F_n^*(v-\xi)-F(\hat{v}^{*}_{n,l}-\hat{\xi}^{*}_{n,m,l})+F(v-\xi)|=o_p(n^{-1/2})
\ens
and
\bes
    H_{n2}=|F_n^*(\hat{v}^{*}_{n,l}+\hat{\xi}^{*}_{n,m,l}) - F_n^*(v+\xi)-F(\hat{v}^{*}_{n,l}+\hat{\xi}^{*}_{n,m,l})+F(v+\xi)|=o_p(n^{-1/2}).
\ens
\end{lemma}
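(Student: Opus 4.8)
The plan is to prove the statement for $H_{n1}$ in full; the bound for $H_{n2}$ follows verbatim with $v-\xi$ replaced by $v+\xi$. Write $\theta=v-\xi$, regarded as the quantile $F^{-1}(p)=\theta$, and let $x_n=\hat{v}^{*}_{n,l}-\hat{\xi}^{*}_{n,m,l}$ denote the random argument. Introducing the centered bootstrap process $G_n^*(x):=F_n^*(x)-F(x)$, one has $H_{n1}=|G_n^*(x_n)-G_n^*(\theta)|$, so the claim is that the increment of $\sqrt n\,G_n^*$ between $x_n$ and $\theta$ is $o_p(1)$. First I would strip off the randomness of the argument: Lemma \ref{lm-1107-6} gives $|x_n-\theta|=O_p(n^{-1/2})$, so for every $\epsilon>0$ there is an $M$ with $\P(|x_n-\theta|>Mn^{-1/2})<\epsilon$ for large $n$, and on the complementary event $H_{n1}\le \sup_{|x-\theta|\le Mn^{-1/2}}|G_n^*(x)-G_n^*(\theta)|$. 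It therefore suffices to show, for each fixed $M$, that $\sqrt n\,\sup_{|x-\theta|\le Mn^{-1/2}}|G_n^*(x)-G_n^*(\theta)|=o_p(1)$.

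The core is to bound this supremum. For $x\ge\theta$ (the case $x<\theta$ being symmetric) I would use the decomposition
\[
\sqrt n\,[G_n^*(x)-G_n^*(\theta)]=\sqrt n\big\{[F_n^*(x)-F_n^*(\theta)]-[F_n(x)-F_n(\theta)]\big\}+\sqrt n\big\{[F_n(x)-F_n(\theta)]-[F(x)-F(\theta)]\big\}.
\]
The second, non-bootstrap, term is the increment of the ordinary empirical process over a window shrinking at rate $n^{-1/2}$; its variance over that window is $O(n^{-1/2})$, so it is $o_p(1)$ pointwise. For the first term I would condition on the data: it equals $n^{-1/2}\sum_{i=1}^n[I(\theta<X_i^*\le x)-(F_n(x)-F_n(\theta))]$, a sum of conditionally i.i.d.\ centered Bernoulli variables whose conditional variance is $F_n(x)-F_n(\theta)$, and by Glivenko--Cantelli together with the continuity of $F$ at $\theta$ this is $O_p(n^{-1/2})$ uniformly on the window. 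Hence each fixed point of the window is controlled pointwise.

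To pass from pointwise smallness to the supremum I would discretize. Partition $[\theta-Mn^{-1/2},\theta+Mn^{-1/2}]$ into $K_n$ subintervals with endpoints $t_0<\cdots<t_{K_n}$. Monotonicity of $F_n^*$ and $F$ yields, for $x\in[t_j,t_{j+1}]$, the two-sided bound $G_n^*(t_j)-G_n^*(\theta)-[F(t_{j+1})-F(t_j)]\le G_n^*(x)-G_n^*(\theta)\le G_n^*(t_{j+1})-G_n^*(\theta)+[F(t_{j+1})-F(t_j)]$, so that
\[
\sup_{|x-\theta|\le Mn^{-1/2}}|G_n^*(x)-G_n^*(\theta)|\le \max_{0\le j\le K_n}|G_n^*(t_j)-G_n^*(\theta)|+\max_{0\le j< K_n}[F(t_{j+1})-F(t_j)].
\]
Since $F$ is continuous (indeed differentiable) at $\theta$, choosing $K_n\to\infty$ makes the grid spacing $2Mn^{-1/2}/K_n$ and hence $\max_j[F(t_{j+1})-F(t_j)]=o(n^{-1/2})$. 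It then remains to prove $\sqrt n\,\max_{0\le j\le K_n}|G_n^*(t_j)-G_n^*(\theta)|=o_p(1)$, a maximum over a grid whose cardinality grows.

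The hard part is exactly this maximal inequality. The only exponential tool available in the paper is Hoeffding's inequality, which ignores the variance and gives a per-point tail of order $e^{-c\eta^2}$; a union bound over $K_n$ points then yields $K_n\,e^{-c\eta^2}$, which does not vanish for a fixed threshold $\eta$ once $K_n\to\infty$. To close the argument one must exploit that each increment variance is only $O(n^{-1/2})$, which calls for a Bernstein/Bennett-type bound with tail $e^{-c\eta^2\sqrt n}$ (applied conditionally for the bootstrap part and unconditionally for the classical part); such a tail beats any polynomially growing $K_n$, e.g.\ $K_n=n$. Equivalently, one may invoke the conditional Donsker/stochastic-equicontinuity property of the bootstrap empirical process, or cite the weak oscillation counterpart of Lemma 3.7 of \cite{Zuo2015}. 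Supplying this refined concentration is the crux; the monotone-grid reduction and the Chebyshev bookkeeping around it are routine.
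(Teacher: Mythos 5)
Your plan is a genuinely different route from the paper's, and as written it is not complete: you yourself isolate the decisive step --- a maximal inequality for $\sqrt n\,\max_{0\le j\le K_n}|G_n^*(t_j)-G_n^*(\theta)|$ over a grid of growing cardinality --- and then stop at the observation that Hoeffding plus a union bound fails and that a Bernstein/Bennett bound exploiting the $O(n^{-1/2})$ increment variance ``must'' be used. That diagnosis is correct (Bernstein does give a tail of order $e^{-c\eta^2\sqrt n}$ per grid point, which beats any polynomial $K_n$), but the bound is never actually applied, conditionally on the data, nor is the resulting union bound carried out; ``invoke stochastic equicontinuity of the bootstrap empirical process'' or ``cite the weak oscillation counterpart of Lemma 3.7 of Zuo (2015)'' is an appeal to a result that is neither in the paper nor proved by you. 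So the crux of the lemma is missing from the proposal. A secondary soft spot: your claim that the conditional variance $F_n(x)-F_n(\theta)$ is ``$O_p(n^{-1/2})$ uniformly on the window'' does not follow from Glivenko--Cantelli alone (that only gives $o_p(1)$), and controlling it at rate $n^{-1/2}$ uniformly is itself an oscillation statement of the same kind you are trying to prove --- though for the pointwise step the weaker $o_p(1)$ bound suffices.

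The paper avoids the maximal-inequality issue entirely. It applies Ghosh's lemma (Lemma \ref{lm-1107-7}) with $U_n=\sqrt n\,[F_n^*(x_n)-F_n^*(v+\xi)]$ and $V_n=\sqrt n\,[F(x_n)-F(v+\xi)]$: condition (a) is your Lemma \ref{lm-1107-6}, and condition (b) reduces, via monotonicity of $F_n^*$ and the identity $F(x)<p\iff x<F^{-1}(p)$, to showing $\P\bigl(F_n^*(\eta_n(t))-F_n^*(v+\xi)\ge (t+\epsilon)/\sqrt n\bigr)\to 0$ at the \emph{deterministic} point $\eta_n(t)=F^{-1}(F(v+\xi)+(t+\epsilon/2)/\sqrt n)$, which is handled by a conditional binomial plus Chebyshev --- no uniformity over a window, no exponential inequality, no grid. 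The paper also treats separately the case where $F^{-1}$ jumps at $F(v+\xi)$ (possible since only $G'(\xi)>0$ is assumed, so $F$ may be flat on one side), where the relevant bootstrap increment is exactly zero. If you want to keep your oscillation-based architecture, you need to state and prove the conditional Bernstein bound and the union bound explicitly; otherwise the Ghosh-lemma route is the more economical way to get the $o_p(n^{-1/2})$ rate from only Chebyshev-level concentration.
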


\begin{proof}[Proof of Lemma \ref{lm-1107-8}]
Let
\bes
    U_n &=& n^{1/2}[F_n^*(\hat{v}^{*}_{n,l}+\hat{\xi}^{*}_{n,m,l})-F_n^*(v+\xi)]\\
    V_n &=& n^{1/2}[F(\hat{v}^{*}_{n, l}+\hat{\xi}^{*}_{n,m,l})-F(v+\xi)].
\ens
By Taylor expansion and Lemma \ref{lm-1107-6},
\bes
    F(\hat{v}^{*}_{n, l}+\hat{\xi}^{*}_{n, m,l})-F(v+\xi)=O(|\hat{v}^{*}_{n, l}+\hat{\xi}^{*}_{n, m,l}-v-\xi|)=O_p(n^{-1/2}),\, n\to \infty.
\ens
Thus $V_n$ satisfies $(a)$ of Lemma \ref{lm-1107-7}.

Consider the case $t>0$. Define the right limit as
\bes
    \beta:=\lim_{t\to 0^+}F^{-1}(F(v+\xi)+t/\sqrt n).
\ens
Since $F^{-1}$ may be not continuous at $F(v+\xi)$, there are two cases to consider. When $\beta=v+\xi$, using $F(x) < p$ if and only if $x<F^{-1}(p)$, we have
\begin{eqnarray}\label{eq-1830-1}
\begin{aligned}
    \{V_n\leq t\}
    =& \ll\{F(\hat{v}^{*}_{n, l}+\hat{\xi}^{*}_{n, m,l})-F(v+\xi)\leq \frac t{\sqrt n}\rr\}\\
    \subset& \ll\{F(\hat{v}^{*}_{n, l}+\hat{\xi}^{*}_{n, m,l})<F(v+\xi)+ \frac {t+\epsilon/2}{\sqrt n}\rr\}\\
   =& \ll\{\hat{v}^{*}_{n, l}+\hat{\xi}^{*}_{n, m,l} < F^{-1}\ll(F(v+\xi)+\frac {t+\epsilon/2}{\sqrt n}\rr)\rr\}\\
   \subset& \ll\{F_n^*(\hat{v}^{*}_{n, l}+\hat{\xi}^{*}_{n, m,l})\leq F_n^*(\eta_n(t))\rr\}
\end{aligned}
\end{eqnarray}
where
\bes
    \eta_n(t)=F^{-1}\left(F(v+\xi)+\frac {t+\epsilon/2}{\sqrt n}\right).
\ens
By (\ref{eq-1830-1}) and the expressions of $U_n$ and $V_n$, we have
\be
\label{eq-1107-9}
    \P(U_n\geq t+\epsilon, V_n \leq t)\leq \P\left(F_n^*(\eta_n(t))-F_n^*(v+\xi)\geq \frac{t+\epsilon}{\sqrt n}\right).
\en
Since $F$ is continuous at $v+\xi$, which implies that $F(\eta_n(t))-F(v+\xi)=\frac {t+\epsilon/2}{\sqrt n} >0$. Then for all $n$ sufficiently large
\bes
    p_n:=F_n(\eta_n(t))-F_n(v+\xi)>0,\quad a.s.
\ens
Then for a sufficiently large $n$, given $X_1,X_2,\ldots,X_n$, we have
\bes
    Z_n^*=:n\left(F_n^*(\eta_n(t))-F_n^*(v+\xi)\right) \sim \text{Binomial}(n, p_n).
\ens
By using the Chebyshev inequality, and noting that $E(p_n)=\frac{t+\epsilon/2}{\sqrt n}$, we have
\begin{eqnarray*}
    &&\P\left(F_n^*(\eta_n(t))-F_n^*(v+\xi)\geq \frac{t+\epsilon}{\sqrt n}\right)\\
    &&= \E\left[\P^*\left(F_n^*(\eta_n(t))-F_n^*(v+\xi)\geq \frac{t+\epsilon}{\sqrt n}\right)\right]\\
    &&=\E\left[\P^*\left(Z_n^*-np_n\geq \sqrt n(t+\epsilon)-np_n\right)\right]\\
    &&\leq\E\left[\P^*\left(|Z_n^*-np_n|\geq \frac\epsilon3\sqrt n\right)\right]\\
    &&\leq \E\left[\frac{9p_n(1-p_n)}{\epsilon^2}\right]\leq \frac {9(t+\epsilon/2)}{\sqrt n \epsilon^2}\to 0,\quad n\to \infty.
\end{eqnarray*}
Returning to (\ref{eq-1107-9}), the first condition in (b) of Lemma \ref{lm-1107-8} is established for $t>0$ and $\beta=v+\xi$.

When $t>0$ and $\beta>v+\xi$, let $\theta$ be any point in the open interval $(v+\xi, \beta)$.
As has been proved in Section \ref{Sec:strong} that $\hat{v}^{*}_{n, l}+\hat{\xi}^{*}_{n, m,l}\to v+\xi,\, a.s.$ which implies
$\P(\hat{v}^{*}_{n, l}+\hat{\xi}^{*}_{n, m,l}>\theta)\to 0$ and
\bes
    \P(U_n\geq t+\epsilon, V_n\leq t)=\P(U_n\geq t+\epsilon, V_n\leq t, \hat{v}^{*}_{n, l}+\hat{\xi}^{*}_{n, m,l}\leq \theta)+o(1), n\to\infty.
\ens
Since $\eta_n(t)\to \beta>\theta$, then for sufficiently large $n$
\begin{eqnarray*}
    \{V_n\leq t, \hat{v}^{*}_{n, l}+\hat{\xi}^{*}_{n, m,l}\leq \theta\}
    &\subset& \{F(\hat{v}^{*}_{n, l}+\hat{\xi}^{*}_{n, m,l})<F(v+\xi)+ \frac {t+\epsilon/2}{\sqrt n},~ \hat{v}^{*}_{n, l}+\hat{\xi}^{*}_{n, m,l}\leq \theta\}\\
    &\subset& \{\hat{v}^{*}_{n, l}+\hat{\xi}^{*}_{n, m,l}<F^{-1}(F(v+\xi)+\frac {t+\epsilon/2}{\sqrt n}), ~ \hat{v}^{*}_{n,l}+\hat{\xi}^{*}_{n, m,l}\leq \theta\}\\
    &\subset& \{F_n^*(\hat{v}^{*}_{n,l}+\hat{\xi}^{*}_{n, m,l})\leq F_n^*(\theta)\}.
\end{eqnarray*}
Then similar to (\ref{eq-1107-9}), we have
\be
    \P(U_n\geq t+\epsilon, V_n\leq t, \hat{v}^{*}_{n,l}+\hat{\xi}^{*}_{n, m,l}\leq \theta)
    \leq \P\left(F_n^*(\theta)-F_n^*(v+\xi)\geq \frac{t+\epsilon}{\sqrt n}\right).
\en
Note that by the definition of $\beta$ and $\theta$, almost surely there are no sample in the interval
$[v+\xi, \theta]$, hence no bootstrap sample in the same interval. So $F_n^*(\theta)-F_n^*(v+\xi)=0,\, a.s.$ Hence
\bes
    \P\left(F_n^*(\theta)-F_n^*(v+\xi)\geq \frac{t+\epsilon}{\sqrt n}\right)=0.
\ens
Thus we establish the first condition in $(b)$ of Lemma \ref{lm-1107-8} for $t>0$. The case $t\leq 0$ and the second condition of $(b)$ can be proved similarly. That is, we obtain $H_{2n}=o_p(n^{-1/2})$.

The proof of $H_{1n}=o_p(n^{-1/2})$ follows a similar fashion. We omit the details.
\end{proof}

Based on Lemma \ref{lm-1107-6} and Lemma \ref{lm-1107-8}, we have the following theorem.

\begin{theorem}\label{th:02}
Suppose $F$ is continuous in the neighborhoods of $v\pm \xi$, and differentiable at $v$ and $v\pm \xi$, with $F'(v)>0$ and
$G'(\xi)=F'(v-\xi)+F'(v+\xi)>0$, then as $n\to\infty$
\bes
    \hat{\xi}^{*}_{n,m,l} - \xi = \frac{\frac12-[F^*_n(v+\xi)-F^*_n(v-\xi)]}{G'(\xi)} + \frac{F'(v+\xi)-F'(v-\xi)}{G'(\xi)}\frac{\frac12-F^*_n(v)}{F'(v)}+R_{n2}
\ens
with
\bes
    R_{n2}=o_p(n^{-1/2}).
\ens
\end{theorem}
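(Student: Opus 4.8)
The plan is to replay the argument of Theorem \ref{th:01} with the same bookkeeping, but to replace every almost-sure ingredient by the in-probability counterpart established in this section: Lemma \ref{lm-1107-6} in place of the a.s. rate of Lemma \ref{lm-1107-4}, and Lemma \ref{lm-1107-8} in place of the a.s. oscillation bound of Lemma \ref{lm-1107-5}. First I would invoke Lemma \ref{lm-1107-8} to write, for both the $+$ and the $-$ combinations,
$$F(\hat{v}^{*}_{n,l} \pm \hat{\xi}^{*}_{n,m,l}) - F(v \pm \xi) = F_n^*(\hat{v}^{*}_{n,l} \pm \hat{\xi}^{*}_{n,m,l}) - F_n^*(v \pm \xi) + o_p(n^{-1/2}),$$
and subtract the two to obtain the exact analogue of \eqref{eq-1107-5} with the $O(n^{-3/4}\log n)$ term replaced by $o_p(n^{-1/2})$.

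Next I would Taylor-expand $F$ about $v+\xi$ and about $v-\xi$. This is the only place where the relaxed hypotheses genuinely intervene: whereas Theorem \ref{th:01} exploited twice differentiability to bound the second-order term by $O(\log n/n)$, here I have only first differentiability at $v\pm\xi$, so I would use the mere definition of the derivative, $F(v+\xi+h) - F(v+\xi) = F'(v+\xi)h + o(h)$, combined with $h = \hat{v}^{*}_{n,l} + \hat{\xi}^{*}_{n,m,l} - v - \xi = O_p(n^{-1/2})$ from Lemma \ref{lm-1107-6}, to conclude that the remainder is $o_p(n^{-1/2})$. This produces the analogue of \eqref{eq-1107-6}.

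Combining these two displays, and then invoking Lemma \ref{lm-1107-3} (whose $O(\log n/n)$ error is comfortably $o_p(n^{-1/2})$) to replace $G_n^*(\hat{\xi}^{*}_{n,m,l})$ by $\tfrac12$, I would arrive at
$$F'(v+\xi)(\hat{v}^{*}_{n,l}+\hat{\xi}^{*}_{n,m,l}-v-\xi) - F'(v-\xi)(\hat{v}^{*}_{n,l}-\hat{\xi}^{*}_{n,m,l}-v+\xi) = \tfrac12 - [F_n^*(v+\xi) - F_n^*(v-\xi)] + o_p(n^{-1/2}).$$
Rewriting the left-hand side through the algebraic identity $[F'(v+\xi)-F'(v-\xi)](\hat{v}^{*}_{n,l}-v) + G'(\xi)(\hat{\xi}^{*}_{n,m,l}-\xi)$ and solving for $\hat{\xi}^{*}_{n,m,l}-\xi$ isolates the leading term $\frac{1/2-[F_n^*(v+\xi)-F_n^*(v-\xi)]}{G'(\xi)}$ together with a correction term proportional to $\hat{v}^{*}_{n,l}-v$.

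Finally I would substitute the in-probability version of Theorem 3.9 of \cite{Zuo2015} at $p=\tfrac12$, namely $\hat{v}^{*}_{n,l} - v = \frac{1/2 - F_n^*(v)}{F'(v)} + o_p(n^{-1/2})$, valid under mere differentiability at $v$, to reach the stated representation with $R_{n2}=o_p(n^{-1/2})$. The main obstacle will be the Taylor step: under only first-order differentiability one cannot manufacture a quadratic error term, so some care is needed to convert the deterministic $o(h)$ smallness of the remainder into an $o_p(n^{-1/2})$ bound by feeding in the stochastic rate $h=O_p(n^{-1/2})$ from Lemma \ref{lm-1107-6}; once that conversion is in hand, the remainder of the argument is a routine transcription of the strong-case computation with $o_p(n^{-1/2})$ in the role of $O(n^{-3/4}\log n)$.
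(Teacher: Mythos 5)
Your proposal is correct and follows essentially the same route as the paper: the paper's proof of Theorem \ref{th:02} likewise replays the argument of Theorem \ref{th:01}, substituting Lemma \ref{lm-1107-8} and Lemma \ref{lm-1107-6} for their almost-sure counterparts, using Lemma \ref{lm-1107-3} to get $G_n^*(\hat{\xi}^{*}_{n,m,l})=\tfrac12+o_p(n^{-1/2})$, and finishing by inserting the weak Bahadur representation of $\hat{v}^{*}_{n,l}$ from \cite{Zuo2015}. Your explicit handling of the first-order Taylor step (converting the deterministic $o(h)$ remainder into $o_p(n^{-1/2})$ via the $O_p(n^{-1/2})$ rate of Lemma \ref{lm-1107-6}) is exactly the detail the paper leaves implicit under ``following the same steps.''
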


\begin{proof}[Proof of Theorem \ref{th:02}]
Similar to the proof of Theorem \ref{th:01}, it follows from Lemma \ref{lm-1107-3}, for any fixed $l, m \ge 1$, that
\be
\label{eq-1107-10}
    G_{n}^{*}(\hat{\xi}^{*}_{n,m,l})=\frac{1}{2}+o_p(n^{-1/2}),\quad n\rightarrow \infty.
\en
Following the same steps as the proof of Theorem 1, Lemma \ref{lm-1107-8}, Lemma \ref{lm-1107-6} and (\ref{eq-1107-10}) yield

\be
\label{eq-1107-11}
    \hat{\xi}^{*}_{n,m,l}-\xi &=& \frac{\frac12-[F^*_n(v+\xi)-F^*_n(v-\xi)]}{G'(\xi)} \\
    && +\frac{F'(v+\xi)-F'(v-\xi)}{G'(\xi)} (\hat{v}^{*}_{n,l}- v) +o_p(n^{-1/2}).\nonumber
\en
Note that Lemma 3.4 of \cite{Zuo2015} implies
\be
\label{eq-1107-12}
    \hat{v}^{*}_{n, l}= v+\frac{\frac12-F^*_n(v)}{F'(v)}+o_p(n^{-1/2}).
\en
The proof is now completed by inserting (\ref{eq-1107-12}) into (\ref{eq-1107-11}).
\end{proof}

Similar to the proof of Theorem \ref{th:add1}, by the same arguments of Lemma \ref{lm-1107-6}, Lemma \ref{lm-1107-8} and Theorem \ref{th:02},
we have the following weak Bahadur representation of bootstrap sample MAD.

\begin{theorem}\label{th:add2}
Under the conditions of Theorem \ref{th:02}, we have as $n\to\infty$
\begin{eqnarray*}
\text{MAD}_n^*- \xi &=& \frac{\frac12-[F^*_n(v+\xi)-F^*_n(v-\xi)]}{G'(\xi)} \\
&&+ \frac{F'(v+\xi)-F'(v-\xi)}{G'(\xi)}\frac{\frac12-F^*_n(v)}{F'(v)}+o_p(n^{-1/2}).
\end{eqnarray*}
\end{theorem}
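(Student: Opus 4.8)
The plan is to reduce Theorem \ref{th:add2} to Theorem \ref{th:02} by exactly the device used in the proof of Theorem \ref{th:add1}, now invoking the \emph{weak} preliminary lemmas throughout. The key structural fact is that MAD$_n^*$ is an average of two ``intermediate'' MAD order statistics centered at Med$_n^*$, while Med$_n^*$ is itself the average of two bootstrap median order statistics, namely Med$_n^* = (\hat{v}^{*}_{n,1} + \hat{v}^{*}_{n,2})/2$. Since $|\text{Med}_n^* - v|$ is dominated by $\max\{|\hat{v}^{*}_{n,1} - v|, |\hat{v}^{*}_{n,2} - v|\}$, a union bound over the two tail estimates from Lemma \ref{lem:01} transfers the exponential deviation bound to Med$_n^*$; in particular $|\text{Med}_n^* - v| = O_p(n^{-1/2})$, as in Lemma \ref{lm-1107-6}.

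First I would set $\widetilde{W}_i^* = |X_i^* - \text{Med}_n^*|$ and $\widetilde{\xi}^*_{n,m} = \widetilde{W}^*_{\lrd{\frac{n+m}{2}}:n}$, the analogue of $\hat{\xi}^{*}_{n,m,l}$ but centered at Med$_n^*$ rather than at the single order statistic $\hat{v}^{*}_{n,l}$. Then I would check that the weak versions of the preliminary lemmas carry over verbatim with $(\hat{v}^{*}_{n,l}, \hat{\xi}^{*}_{n,m,l})$ replaced by $(\text{Med}_n^*, \widetilde{\xi}^*_{n,m})$. For Lemma \ref{lm-1107-6} this is immediate from the deviation bound for Med$_n^*$ above; for Lemma \ref{lm-1107-8} the Ghosh-lemma argument (Lemma \ref{lm-1107-7}) applies unchanged, since it uses only the $O_p(n^{-1/2})$ rate together with the almost-sure consistency $\text{Med}_n^* \pm \widetilde{\xi}^*_{n,m} \to v \pm \xi$, both of which hold for the Med$_n^*$-based quantities. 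Lemma \ref{lm-1107-3} likewise transfers, as the tie-counting bound borrowed from \cite{Zuo2015} does not depend on which bootstrap order statistic supplies the centering.

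Next I would run the argument of Theorem \ref{th:02} with these substitutions to obtain, for each fixed $m \ge 1$,
\bes
    \widetilde{\xi}^*_{n,m} - \xi = \frac{\frac12 - [F^*_n(v+\xi) - F^*_n(v-\xi)]}{G'(\xi)} + \frac{F'(v+\xi) - F'(v-\xi)}{G'(\xi)}\frac{\frac12 - F^*_n(v)}{F'(v)} + o_p(n^{-1/2}).
\ens
Crucially, the two leading terms are \emph{free of} $m$: they depend only on $F^*_n$ evaluated at the fixed points $v$ and $v\pm\xi$. Finally, writing MAD$_n^* = (\widetilde{\xi}^*_{n,1} + \widetilde{\xi}^*_{n,2})/2$ and averaging the expansions for $m=1$ and $m=2$, the common leading terms survive intact while the two $o_p(n^{-1/2})$ remainders combine into a single $o_p(n^{-1/2})$ term, giving the claimed representation.

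I expect the main obstacle to be verifying Lemma \ref{lm-1107-8} for the Med$_n^*$-centered quantities, since that proof manipulates events of the form $\{\hat{v}^{*}_{n,l} + \hat{\xi}^{*}_{n,m,l} < F^{-1}(\cdots)\}$ and exploits the monotone relationship between $F$ and $F^{-1}$ at $v+\xi$. One must confirm that replacing the single order statistic $\hat{v}^{*}_{n,l}$ by the averaged centering Med$_n^*$ leaves the set inclusions valid (in particular the passage to $F^*_n(\cdot) \le F^*_n(\eta_n(t))$), and does not disturb the binomial/Chebyshev tail estimate that produces the $o_p$ conclusion. Because those steps hinge only on the $O_p(n^{-1/2})$ rate of the centered sum and on continuity of $F$ at $v\pm\xi$—both available here—the transfer should succeed, but it is the one place where the averaged centering genuinely has to be tracked.
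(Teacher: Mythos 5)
Your proposal is correct and follows essentially the same route as the paper: the paper proves Theorem \ref{th:add2} by declaring it ``similar to the proof of Theorem \ref{th:add1}'', i.e.\ transferring the deviation bound to $\text{Med}_n^* = (\hat{v}^{*}_{n,1}+\hat{v}^{*}_{n,2})/2$, re-running Lemmas \ref{lm-1107-6}, \ref{lm-1107-8} and Theorem \ref{th:02} with $(\text{Med}_n^*, \widetilde{\xi}^*_{n,m})$ in place of $(\hat{v}^{*}_{n,l}, \hat{\xi}^{*}_{n,m,l})$, and averaging the $m=1,2$ expansions whose leading terms do not depend on $m$. Your write-up is in fact more explicit than the paper's about where the substitution has to be checked (notably in the Ghosh-lemma step), but the argument is the same.
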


\section{Joint asymptotic normality for the bootstrap median and MAD}\label{Sec:joint}
\paragraph{}

In this section, we consider the joint asymptotic normality of $(\text{Med}^{*}_{n}, \text{MAD}^{*}_{n})$.
As in \cite{Fal1997} and \cite{SM2009}, define $\alpha= F(v-\xi)+F(v+\xi)$, $\beta=F'(v-\xi)-F'(v+\xi)$,
and $\gamma=\beta^2+4(1-\alpha)\beta F'(v)$. We need the following lemma, which is Proposition A.1 in \cite{WC2009}.

\begin{lemma}
\label{lm-1107-9}
Let $\{V_i\}$ be a sequence of random variables, such that for some function $h$, as $n\to\infty$, $h(V_1,\ldots,V_n)\xrightarrow{d}\Theta$, where $\Theta$ has a distribution function $H$. If $\{U_i\}$ is a sequence of random variables such that
\bes
    \P(U_n-h(V_1,\ldots,V_n)\leq s|V_1,\ldots,V_n)\rightarrow F(s)
\ens
almost surely for all $s\in\mathbb{R}$, where $F$ is a continuous distribution function, then
\bes
    \P(U_n\leq t)\rightarrow (H*F)(t)
\ens
for all $t\in\mathbb{R}$, where "$*$" denotes the convolution operator.
\end{lemma}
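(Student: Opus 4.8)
The plan is to treat $W_n := h(V_1,\ldots,V_n)$ as a ``signal'' carried by the conditioning $\sigma$-field and the conditional law of the ``residual'' $U_n - W_n$ as the source of the convolution factor, and then to glue the two limits together. First I would condition on $\mathcal{F}_n := \sigma(V_1,\ldots,V_n)$. Writing $G_n(s) := \P(U_n - W_n \le s \mid V_1,\ldots,V_n)$ for a regular version of the conditional distribution function of the residual, and using that $W_n$ is $\mathcal{F}_n$-measurable, the substitution (``freezing'') property of regular conditional distributions gives
\bes
    \P(U_n \le t) = \E\big[\P(U_n - W_n \le t - W_n \mid \mathcal{F}_n)\big] = \E\big[G_n(t - W_n)\big].
\ens
The hypothesis says precisely that $G_n(s) \to F(s)$ almost surely for each fixed $s \in \R$, with continuous limit $F$, while the target can be written as $(H*F)(t) = \int_{\R} F(t-w)\,dH(w) = \E[F(t-\Theta)]$. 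Hence it suffices to prove the two limits $\E[G_n(t-W_n)] - \E[F(t-W_n)] \to 0$ and $\E[F(t-W_n)] \to \E[F(t-\Theta)]$.

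The second limit is the easy half: the map $w \mapsto F(t-w)$ is bounded and continuous ($F$ being a continuous distribution function), and $W_n \toind \Theta$, so the portmanteau theorem gives $\E[F(t-W_n)] \to \E[F(t-\Theta)] = (H*F)(t)$ directly, with no continuity-point caveat required.

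The first limit is where the real work lies, and I expect the \textbf{main obstacle} to be upgrading the pointwise-in-$s$ almost-sure convergence of $G_n$ to \emph{uniform} convergence, so that the \emph{random} evaluation point $t - W_n$ can be controlled. The hypothesis only supplies, for each fixed $s$, an almost-sure set (depending on $s$) on which $G_n(s) \to F(s)$, and a naive intersection over all $s$ is uncountable. I would resolve this with a P\'olya-type argument: enumerate the rationals $\{q_k\}$, intersect the countably many almost-sure sets to obtain a single event of probability one on which $G_n(q_k) \to F(q_k)$ for every $k$, and then exploit the monotonicity of each $G_n$ together with the continuity of the limit $F$ (using $F(-\infty)=0$ and $F(+\infty)=1$ to control the tails, via a finite partition of $\R$ into pieces of small $F$-mass) to conclude
\bes
    \Delta_n := \sup_{s \in \R} |G_n(s) - F(s)| \to 0, \quad a.s.
\ens

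Granting this uniform bound, since $\Delta_n \le 1$ is bounded and $|G_n(t-W_n) - F(t-W_n)| \le \Delta_n$ holds pointwise in $\omega$, the bounded convergence theorem yields $\E[G_n(t-W_n)] - \E[F(t-W_n)] \to 0$. Combining this with the portmanteau limit of the previous paragraph gives $\P(U_n \le t) \to (H*F)(t)$ for every $t \in \R$, completing the proof. The only remaining technical point, worth a sentence in the write-up, is the justification of the identity $\P(U_n \le t \mid \mathcal{F}_n) = G_n(t-W_n)$ through a regular conditional distribution of $U_n - W_n$ given $\mathcal{F}_n$; this is routine but should be stated since it underpins the whole conditioning step.
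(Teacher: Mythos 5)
The paper itself gives no proof of this lemma: it is imported verbatim as Proposition A.1 of \cite{WC2009}, so there is no internal argument to compare yours against. Your proof is correct, and it is essentially the standard argument behind results of this ``conditional limit plus convolution'' type: the decomposition $\P(U_n\le t)=\E[G_n(t-W_n)]$ via the freezing property of a regular conditional distribution, the portmanteau step $\E[F(t-W_n)]\to\E[F(t-\Theta)]=(H*F)(t)$ using that $w\mapsto F(t-w)$ is bounded and continuous, and --- the one genuinely nontrivial point, which you correctly isolate --- the upgrade of the pointwise almost-sure convergence $G_n(s)\to F(s)$ to uniform convergence by P\'olya's argument (rational grid, monotonicity of $G_n$, continuity of $F$, tail control via $F(-\infty)=0$ and $F(+\infty)=1$), which is exactly what licenses evaluating at the random point $t-W_n$ and then invoking bounded convergence. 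The only hygiene points for a full write-up are the ones you already flag: fix a regular version of the conditional law of $U_n-W_n$ given $\mathcal{F}_n$ so that $G_n(\cdot)$ is a genuine distribution function for almost every sample point (monotonicity is what the P\'olya squeeze needs) and so that the substitution $\P(U_n\le t\mid \mathcal{F}_n)=G_n(t-W_n)$ is legitimate; and observe that for each fixed $s$ this regular version agrees almost surely with the conditional probability appearing in the hypothesis, so the countable intersection over the rationals still has probability one.
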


Based on this lemma, we are now able to show the following theorem.

\begin{theorem}\label{th:03}
Suppose $F$ is continuous in the neighborhoods of $v\pm \xi$, and differentiable at $v$ and $v\pm \xi$, with $F'(v)>0$ and
$G'(\xi)=F'(v-\xi)+F'(v+\xi)>0$, then as $n\to\infty$
\bes
    \begin{pmatrix}
      \sqrt n (\text{Med}_n^* - v)\\[2ex]
      \sqrt n (\text{MAD}_n^* - \xi)
    \end{pmatrix}
     \overset{d}{\longrightarrow} N\left(\begin{pmatrix}
      0\\
      0
    \end{pmatrix},\, \Sigma\right)
\ens
where $\Sigma=(\sigma_{ij})_{2\times2}$ with
\bes
&&\sigma_{11}=\frac{1}{2F'(v)^2},\\
&&\sigma_{12}=\sigma_{21}=\frac1{2F'(v)G'(\xi)}\left(1-4F(v-\xi)+\frac{\beta}{F'(v)}\right),\\
&&\sigma_{22}=\frac1{2G'(\xi)^2}\left(1+\frac{\gamma}{F'(v)^2}\right).
\ens
\end{theorem}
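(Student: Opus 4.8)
The plan is to reduce the bivariate statement to a scalar one by the Cram\'er--Wold device and then to separate the bootstrap randomness from the sampling randomness by means of the convolution Lemma \ref{lm-1107-9}. Fix $\lambda=(\lambda_1,\lambda_2)^\top\in\R^2$ and put
\[
    U_n=\lambda_1\sqrt n\,(\text{Med}_n^*-v)+\lambda_2\sqrt n\,(\text{MAD}_n^*-\xi).
\]
By Theorem \ref{th:add2}, together with the median analogue $\text{Med}_n^*-v=[\tfrac12-F_n^*(v)]/F'(v)+o_p(n^{-1/2})$ (which holds because $\text{Med}_n^*=(\hat v^*_{n,1}+\hat v^*_{n,2})/2$ and each summand obeys \eqref{eq-1107-12}), $U_n$ equals, up to an $o_p(1)$ term, a fixed linear combination $c_1P_n^*+c_2Q_n^*$ of the two centered bootstrap empirical quantities
\[
    P_n^*=\sqrt n\,\lrb{F_n^*(v)-\tfrac12},\qquad Q_n^*=\sqrt n\,\lrb{\,[F_n^*(v+\xi)-F_n^*(v-\xi)]-\tfrac12\,},
\]
the coefficients $c_1,c_2$ being read off from the two Bahadur representations.

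Next I would split each empirical quantity about $F_n$: write $P_n^*=\sqrt n[F_n^*(v)-F_n(v)]+\sqrt n[F_n(v)-\tfrac12]$ and likewise for $Q_n^*$. Collecting the $F_n$-terms defines the data part $h(X_1,\dots,X_n)$, while $U_n-h$ is, up to $o_p(1)$, the purely bootstrap part built from $\sqrt n[F_n^*-F_n]$. I would then check the two hypotheses of Lemma \ref{lm-1107-9} with $V_i=X_i$. For (a), $h\toind\Theta_\lambda\sim N(0,s_\lambda^2)$ by the ordinary multivariate CLT applied to the i.i.d.\ bivariate indicator vectors $(I(X_i\le v),\,I(v-\xi<X_i\le v+\xi))$, so in particular $h=O_p(1)$. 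For (b), I would invoke the conditional bootstrap CLT: given the data, $nF_n^*(x)=\sum_iI(X_i^*\le x)$ is a sum of $n$ i.i.d.\ bounded indicators whose conditional means are $F_n(x)$ and whose conditional (co)variances converge almost surely to their population values by the SLLN and Glivenko--Cantelli; hence Lindeberg's condition holds a.s.\ and the conditional law of the bootstrap part tends a.s.\ to a continuous limit $N(0,t_\lambda^2)$, the negligible remainder not affecting this weak limit. Lemma \ref{lm-1107-9} then yields $U_n\toind N(0,s_\lambda^2)*N(0,t_\lambda^2)=N(0,s_\lambda^2+t_\lambda^2)$.

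The crucial simplification is that the conditional bootstrap covariances converge a.s.\ to the very same matrix that governs the data part, so $t_\lambda^2=s_\lambda^2$ and the limit is $N(0,2s_\lambda^2)$; this doubling is exactly the bootstrap inflation that produces, e.g., $\sigma_{11}=1/(2F'(v)^2)$ rather than the sample-median value $1/(4F'(v)^2)$. It remains to identify $2s_\lambda^2$ with $\lambda^\top\Sigma\lambda$. This is the computational heart of the argument and the step I expect to be the main obstacle: one evaluates the $2\times2$ covariance of $(I(X\le v),I(v-\xi<X\le v+\xi))$, whose diagonal entries are both $\tfrac14$ (using $F(v)=\tfrac12$ and $F(v+\xi)-F(v-\xi)=\tfrac12$) and whose off-diagonal entry is $\tfrac14-F(v-\xi)$, then pushes this matrix through the linear map carrying $(P_n^*,Q_n^*)$ into $(\sqrt n(\text{Med}_n^*-v),\sqrt n(\text{MAD}_n^*-\xi))$, doubles, and rewrites the result using $\alpha=F(v-\xi)+F(v+\xi)$, $\beta=F'(v-\xi)-F'(v+\xi)$ and $\gamma=\beta^2+4(1-\alpha)\beta F'(v)$. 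The delicate point here is bookkeeping the sign of the $\beta$-terms arising from the scale-by-location coupling in the MAD representation, since these signs determine $\sigma_{12}$ and the cross term inside $\sigma_{22}$; once they are tracked correctly the three stated entries of $\Sigma$ emerge, and Cram\'er--Wold upgrades the scalar limits to the claimed joint normality.
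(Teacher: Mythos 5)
Your proposal follows essentially the same route as the paper's proof: Cram\'er--Wold reduction, the convolution Lemma \ref{lm-1107-9} with the data part converging by an ordinary CLT and the bootstrap part handled by a conditional CLT given the sample, and the key observation that the two halves contribute equal variances $\tfrac12\bm{\lambda}^T\Sigma\bm{\lambda}$, whence the doubling. The only cosmetic differences are that the paper centers the bootstrap part at $(\text{Med}_n,\text{MAD}_n)$ rather than at $F_n$, cites Serfling and Mazumder (2009) for the limit of the data part instead of computing the indicator covariances directly, and uses a conditional Berry--Esseen bound in place of Lindeberg to get the almost-sure conditional convergence required by Lemma \ref{lm-1107-9}.
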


\begin{proof}[Proof of Theorem \ref{th:03}]
For every vector $\bm{\lambda}=(\lambda_1,\lambda_2)^T$ such that $\bm{\lambda}^T\Sigma\bm{\lambda}>0$,
it suffice to show
\bes
    \bm{\lambda}^T\begin{pmatrix}
      \sqrt n (\text{Med}_n^* - v)\\[2ex]
      \sqrt n (\text{MAD}_n^* - \xi)
    \end{pmatrix}\toind N(0,\bm{\lambda}^T\Sigma\bm{\lambda}),\, n\to\infty.
\ens
Note that
\[ \bm{\lambda}^T\begin{pmatrix}
      \sqrt n (\text{Med}_n^* - v)\\[2ex]
      \sqrt n (\text{MAD}_n^* - \xi)
    \end{pmatrix}
=\bm{\lambda}^T\begin{pmatrix}
      \sqrt n (\text{Med}_n^* -\text{Med}_n)\\[2ex]
      \sqrt n (\text{MAD}_n^* - \text{MAD}_n)
    \end{pmatrix}
+\bm{\lambda}^T\begin{pmatrix}
      \sqrt n (\text{Med}_n - v)\\[2ex]
      \sqrt n (\text{MAD}_n - \xi)
    \end{pmatrix}.
\]
It follows from \cite{SM2009} that as $n\to\infty$
$$ h(X_1,\ldots,X_n)= \bm{\lambda}^T\begin{pmatrix}
      \sqrt n (\text{Med}_n - v)\\[2ex]
      \sqrt n (\text{MAD}_n - \xi)
    \end{pmatrix} \toind N\left(0,\frac12\bm{\lambda}^T\Sigma\bm{\lambda}\right).$$
By Lemma \ref{lm-1107-9}, we need only to show that as $n\to\infty$
\be
\label{eq-1107-13}
    \sup_{s\in \mathbb{R}}\left| \P^*\left( \frac{\sqrt n\left(\text{Med}_n^* -\text{Med}_n, ~\text{MAD}_n^* - \text{MAD}_n\right)\bm{\lambda}}
    {\sqrt{\bm{\lambda}^T\Sigma\bm{\lambda}/2}}\leq s\right)-\Phi(s)\right| \rightarrow 0,\quad a.s.
\en
where $\Phi$ is the distribution function of $N(0,1)$. By the weak Bahadur representations of $\text{Med}_n, \text{Med}_n^*, \text{MAD}_n$ and $\text{MAD}_n^*$, we have
\begin{equation*}
    \sqrt n (\text{Med}_n^* -\text{Med}_n)=\sqrt n\frac{F_n(v)-F_n^*(v)}{F'(v)} + o_p(1),
    \end{equation*}
    \begin{eqnarray*}
    \sqrt n (\text{MAD}_n^* - \text{MAD}_n)&=&\sqrt n \frac{F_n(v+\xi)-F_n(v-\xi)-F_n^*(v+\xi)+F^*_n(v-\xi)}{G'(\xi)} \\
    &&\quad -\frac{\beta}{G'(\xi)} \sqrt n\frac{F_n(v)-F_n^*(v)}{F'(v)} + o_p(1).
\end{eqnarray*}
Then the left of (\ref{eq-1107-13}) can be expressed as
\begin{eqnarray*}
    \lefteqn{\sup_{s\in \mathbb{R}}\left| \P^*\left(\sqrt n \frac{\bar Y_n^*-\bar Y_n}{\hat{\sigma}_n}
    \leq \frac{\sqrt{\bm{\lambda}^T\Sigma\bm{\lambda}/2}}{\hat{\sigma}_n}s\right)-\Phi(s)\right|}\\
    =&&\sup_{s\in \mathbb{R}}\left| \P^*\left(\sqrt n \frac{\bar{Y}_n^*-\bar{Y}_n}{\hat{\sigma}_n}
    \leq s\right)-\Phi\ll(\frac{\hat{\sigma}_n}{\sqrt{\bm{\lambda}^T\Sigma\bm{\lambda}/2}}s\rr)\right|\\
    \leq&&\sup_{s\in \mathbb{R}}\left| \P^*\left(\sqrt n \frac{\bar{Y}_n^*-\bar{Y}_n}{\hat{\sigma}_n}
    \leq s\right)-\Phi(s)\right| + \sup_{s\in \mathbb{R}}\left| \Phi\ll(\frac{\hat{\sigma}_n}{\sqrt{\bm{\lambda}^T\Sigma\bm{\lambda}/2}}s\rr)-\Phi(s)\right|,
\end{eqnarray*}
where $\bar Y_n^*=\frac 1n \sum_{i=1}^n Y_i^*$ with
\bes
    Y_i^*=-\frac{\lambda_2}{G'(\xi)}I(v-\xi<X_i^*\leq v+\xi)+\left(\frac{\lambda_2\beta}{G'(\xi)}-\lambda_1\right)I_{(X_i^*\leq v)},
\ens
and $\bar Y_n=\frac 1n \sum_{i=1}^n Y_i=\E(Y_1^*|X_1,\ldots,X_n)$,  $\hat\sigma_n^2=\text{var}(Y_1^*|X_1,\ldots,X_n)=\frac1n \sum_{i=1}^n(Y_i-\bar Y_n)^2$ with
\bes
    Y_i=-\frac{\lambda_2}{G'(\xi)}I(v-\xi<X_i\leq v+\xi)+\left(\frac{\lambda_2\beta}{G'(\xi)}-\lambda_1\right)I(X_i\leq v).
\ens
Since $Y_1^*,\ldots,Y_n^*$ are iid random variables given $X_1,\ldots,X_n$, it follows from Berry-Essen theorem that
\bes
\label{eq-1107-14}
    \sup_{s\in \mathbb{R}}\left| \P^*\left(\sqrt n \frac{\bar{Y}_n^*-\bar{Y}_n}{\hat{\sigma}_n}
    \leq s\right)-\Phi(s)\right|\leq \frac{33}{4}\frac{\sum_{i=1}^n|Y_i-\bar Y_n|^3}{n^{3/2}\hat{\sigma}_n^3}.
\ens
Note that $Y_i-\bar Y_n$ is bounded for $1\leq i\leq n$, and as $n\to\infty$
\be
\label{eq-1107-15}
    \hat{\sigma}_n \rightarrow \sqrt {\bm{\lambda}^T\Sigma\bm{\lambda}/2}\quad a.s.
\en
which implies
\bes
    \sup_{s\in \mathbb{R}}\left| \P^*\left(\sqrt n \frac{\bar{Y}_n^*-\bar{Y}_n}{\hat{\sigma}_n}
    \leq s\right)-\Phi(s)\right|\rightarrow 0,\quad a.s.
\ens
In addition, Taylor expansion and (\ref{eq-1107-15}) yield
\bes
    \sup_{s\in \mathbb{R}}\left| \Phi\ll(\frac{\hat{\sigma}_n}{\sqrt{\bm{\lambda}^T\Sigma\bm{\lambda}/2}}s\rr)-\Phi(s)\right|\rightarrow 0, \quad a.s.
\ens
Now we have proved (\ref{eq-1107-13}), then the conclusion follows.
\end{proof}

\section{An application to the bootstrap projection depth weighted mean}\label{Sec:PWM}
\paragraph{}

In this section, we apply the previous results to obtain the weak Bahadur representation of the bootstrap sample projection depth weighted mean, including the famous Stahel-Donoho location estimator as its special case, described in \cite{Zuo2010}.

Following by \cite{ZCH2004}, the projection depth weighted mean is defined as
\bes
    P\!W\!\!M(F)=\frac{\int_{-\infty}^{\infty} xw(PD(x,F))dF(x)}{\int_{-\infty}^{\infty} w(PD(x,F))dF(x)}
\ens
where $w(t)$ is a weight function on $[0, 1]$, $PD(x, F)=\frac1{1+|x-v|/\xi}$ with $v$ and $\xi$ standing for the median and MAD, respectively. By replacing $F$ with $F_n$ and $F_n^*$, respectively, we get the sample and bootstrap versions of P\!W\!M, i.e.
\bes
    P\!W\!\!M(F_n)=\frac{\sum_{t=1}^n w_i X_i}{\sum_{t=1}^n w_i}, ~\text{and}~P\!W\!\!M(F_n^*)=\frac{\sum_{t=1}^n w_i^* X_i^*}{\sum_{t=1}^n w_i^*},
\ens
with
\bes
    w_i=w(PD(X_i,F_n))=w\left(\frac1{1+|X_i- \text{Med}_n|/ \text{MAD}_n}\right)
\ens
and
\bes
    w_i^*=w(PD(X_i^*,F_n^*))=w\left(\frac1{1+|X_i^* - \text{Med}_n^*|/ \text{MAD}_n^*}\right).
\ens

The follows theorem states the weak Bahadur representation of $P\!W\!\!M(F_n^*)$.

\begin{theorem}\label{th:04}
Suppose $F$ is continuous in the neighborhoods of $v\pm \xi$, and differentiable at $v$ and $v\pm \xi$, with $F'(v)>0$ and $G'(\xi)=F'(v-\xi)+F'(v+\xi)>0$, $w(t)$ is continuously differentiable with $w(0)=0$.
Then as $n\to\infty$, we have
\bes
    P\!W\!\!M(F_n^*)-P\!W\!\!M(F)=\frac1n \sum_{t=1}^n [K(X_i^*)-\E K(X_i^*)] + o_p\ll(\frac1{\sqrt n}\rr),
\ens
where
\bes
    K(x)=\frac{\int_{-\infty}^{\infty} [y-P\!W\!\!M(F)]w'(PD(y,F))f(y,x) dF(y)+[x-P\!W\!\!M(F)]w(PD(x,F))}{\int_{-\infty}^{\infty} w(PD(x,F)) dF(x)}
\ens
with
\begin{eqnarray}
\label{eq-1827-1}\begin{aligned}
    f(x,y)=&\frac{|x-v|}{(\xi+|x-v|)^2}\frac{\frac12-I(v-\xi<y\leq v+\xi)}{G'(\xi)}\\
    & +\left[\frac{|x-v|(F'(v+\xi)-F'(v-\xi))}{G'(\xi)(\xi+|x-v|)^2}+\frac{\xi \text{sign}(x-v)}{(\xi+|x-v|)^2}\right]\frac{\frac12-I(y\leq v)}{F'(v)}.
\end{aligned}\end{eqnarray}
\end{theorem}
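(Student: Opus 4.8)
The plan is to regard $P\!W\!\!M$ as a smooth functional of the integrating distribution and to linearize it around $F$, keeping careful track of the two channels through which replacing $F$ by $F_n^*$ perturbs the estimator: the empirical integrating measure $dF_n^*$, and the plug-in depth $PD(\cdot\,;\text{Med}_n^*,\text{MAD}_n^*)$. Writing $N(F)=\int x\,w(PD(x,F))\,dF(x)$ and $D(F)=\int w(PD(x,F))\,dF(x)$, so that $P\!W\!\!M(F)=N(F)/D(F)$ with $D(F)>0$, I would first reduce matters to the two differences $N(F_n^*)-N(F)$ and $D(F_n^*)-D(F)$ through the quotient linearization
\[
    P\!W\!\!M(F_n^*)-P\!W\!\!M(F)=\frac{1}{D(F)}\ll\{[N(F_n^*)-N(F)]-P\!W\!\!M(F)[D(F_n^*)-D(F)]\rr\}+o_p(n^{-1/2}),
\]
the discarded term being $O_p(n^{-1})$ once each bracket is shown to be $O_p(n^{-1/2})$.

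Next I would split each bracket as a measure channel plus a depth channel, using $N(F_n^*)-N(F)=\int x[w(PD(x;\text{Med}_n^*,\text{MAD}_n^*))-w(PD(x,F))]\,dF_n^*+\int x\,w(PD(x,F))\,d(F_n^*-F)$ and similarly without the factor $x$ for $D$. In the measure channel the depth is frozen at its population value; since $w(0)=0$ and $PD(x,F)=O(1/|x|)$ force $x\,w(PD(x,F))=O(1)$, the summands $X_i^*\,w(PD(X_i^*,F))$ are bounded and this channel is an ordinary bootstrap empirical mean, contributing (through the quotient formula) the direct influence $[x-P\!W\!\!M(F)]w(PD(x,F))/D(F)$ with \emph{no} moment assumption on $F$. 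In the depth channel I freeze the measure and, because $w$ is only $C^1$, use the first-order expansion $w'(PD(x,F))[PD(x;\text{Med}_n^*,\text{MAD}_n^*)-PD(x,F)]$. Computing $\partial PD/\partial v=\xi\,\text{sign}(x-v)/(\xi+|x-v|)^2$ and $\partial PD/\partial\xi=|x-v|/(\xi+|x-v|)^2$ gives
\[
    PD(x;\text{Med}_n^*,\text{MAD}_n^*)-PD(x,F)=\frac{\xi\,\text{sign}(x-v)}{(\xi+|x-v|)^2}(\text{Med}_n^*-v)+\frac{|x-v|}{(\xi+|x-v|)^2}(\text{MAD}_n^*-\xi)+r_n(x).
\]

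I would then insert the weak Bahadur representations of $\text{Med}_n^*$ (Lemma 3.4 of \cite{Zuo2015}, via $\text{Med}_n^*=(\hat v^{*}_{n,1}+\hat v^{*}_{n,2})/2$) and of $\text{MAD}_n^*$ (Theorem \ref{th:add2}), and collect the per-observation summands: the coefficient multiplying the $j$-th summand is exactly the kernel $f(x,X_j^*)$ of \eqref{eq-1827-1}. Since the depth change is already $O_p(n^{-1/2})$ and $x\,w'(PD(x,F))\,\partial PD$ is bounded, I may replace $dF_n^*$ by $dF$ there at cost $o_p(n^{-1/2})$, so the depth channel of the numerator equals $\frac1n\sum_j\int y\,w'(PD(y,F))f(y,X_j^*)\,dF(y)+o_p(n^{-1/2})$ and that of the denominator the same without $y$; combining them through the quotient formula produces the first term of $K$. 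Adding the two channels identifies the representation $\frac1n\sum_i K(X_i^*)+o_p(n^{-1/2})$, and observing that $\int K\,dF=0$ (indeed $\int[x-P\!W\!\!M(F)]w(PD(x,F))\,dF=N(F)-P\!W\!\!M(F)D(F)=0$, while $\int f(y,x)\,dF(x)=0$ because $F(v+\xi)-F(v-\xi)=F(v)=\tfrac12$) shows $\E K(X_1^*)=0$, so the centered and uncentered sums coincide and the stated form follows.

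The hard part will be the uniform (in $x$) control of the remainders under these weak hypotheses, and it has three facets. First, with $w$ merely $C^1$ one cannot invoke a second-order Taylor term; instead I would write the first-order remainder as $[w'(\widetilde{PD})-w'(PD(x,F))](PD(x;\text{Med}_n^*,\text{MAD}_n^*)-PD(x,F))$ and combine uniform continuity of $w'$ on the compact $[0,1]$ with the uniform bound $\sup_x|PD(x;\text{Med}_n^*,\text{MAD}_n^*)-PD(x,F)|=O_p(n^{-1/2})$ — which follows from the Lipschitz continuity of $(v,\xi)\mapsto PD(x;v,\xi)$ uniformly in $x$ together with Lemma \ref{lm-1107-6} — to make it $o_p(1)\cdot O_p(n^{-1/2})$ uniformly. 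Second, integrability of all these terms against $x\,dF$ with no moment condition again rests on $w(0)=0$ (which bounds $x\,w(PD)$ and keeps $x\,w'(PD)\,\partial_\xi PD=O(1)$). Third, the kink of $|x-v|$ at $x=v$ invalidates the linearization pointwise there; but the set of $x$ lying between $v$ and $\text{Med}_n^*$ has $F$-measure $O_p(n^{-1/2})$ and contributes only $O_p(n^{-1/2})\cdot O_p(n^{-1/2})=o_p(n^{-1/2})$ to the integral, so it is negligible.
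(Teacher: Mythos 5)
Your proposal is correct and follows essentially the same route as the paper: linearize $PD(x,F_n^*)-PD(x,F)$ in $(\text{Med}_n^*,\text{MAD}_n^*)$, insert the weak Bahadur representations (Theorem \ref{th:add2} and Lemma 3.4 of \cite{Zuo2015}) to obtain the kernel $f$, treat numerator and denominator separately with a first-order mean-value argument for $w$, swap $dF_n^*$ for $dF$ in the depth channel, apply Fubini, and combine via Slutsky. Your explicit handling of the kink of $|x-v|$ at $x=v$, the integrability via $w(0)=0$, and the verification that $\int K\,dF=0$ are details the paper leaves implicit, but they do not change the argument.
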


\begin{proof}[Proof of Theorem \ref{th:04}]
By Theorem \ref{th:03}, it is easy to check that
\begin{eqnarray*}
    &&PD(x, F_n^*)-PD(x, F)\\
    &&=\frac1{1+|x-\text{Med}_n^*|/\text{MAD}_n^*}-\frac1{1+|x-v|/\xi}\\
    &&=\frac{(\text{MAD}_n^*-\xi)|x-v|+\xi(|x-v|-|x-\text{Med}_n^*|)}{(\text{MAD}_n^*+|x-\text{Med}_n^*|)(\xi+|x-v|)}\\
    &&=\frac{|x-v|}{(\xi+|x-v|)^2} (\text{MAD}_n^*-\xi)+\frac{\xi \text{sign}(x-v)}{(\xi+|x-v|)^2}(\text{Med}_n^*-v)+o_p\ll(\frac1{\sqrt n}\rr).
\end{eqnarray*}
From Theorem \ref{th:add2} and Lemma 3.4 of \cite{Zuo2015}, we have
\bes
    \text{Med}_n^*-v=\frac1n \sum_{i=1}^n \frac{\frac12-I(X_i^*\leq v)}{F'(v)}+o_p\ll(\frac1{\sqrt n}\rr)
\ens
and
\bes
    \text{MAD}_n^*-\xi&=&\frac1n \sum_{i=1}^n \frac{\frac12-I(v-\xi<X_i^*\leq v+\xi)}{G'(\xi)}\\
    &&+\frac{F'(v+\xi)-F'(v-\xi)}{G'(\xi)}(\text{Med}_n^*- v) +o_p\ll(\frac1{\sqrt n}\rr),
\ens
which imply that
\begin{equation}\label{eq-1828-1}
    PD(x, F_n^*)-PD(x, F)=\frac1n \sum_{i=1}^n f(x,X_i^*)+o_p\ll(\frac1{\sqrt n}\rr),
\end{equation}
where $f(x, y)$ is defined by (\ref{eq-1827-1}).
Without loss of generality, we assume $P\!W\!\!M(F)=0$, then
\begin{eqnarray}\label{eq-1828-2}
\begin{aligned}
    &\sqrt n \int_{-\infty}^{\infty} x w(PD(x,F_n^*))dF_n^*(x)\\
    &=\sqrt n \int_{-\infty}^{\infty} x w(PD(x,F_n^*))dF_n^*(x)-\sqrt n \int_{-\infty}^{\infty} xw(PD(x,F))dF(x)\\
    &= \int_{-\infty}^{\infty} x w'(\theta_n^*(x))\{\sqrt n[PD(x,F_n^*)-PD(x,F)]\}dF_n^*(x)\\
    &\quad-\int_{-\infty}^{\infty} x w(PD(x,F))d\{\sqrt n [F_n^*(x)-F(x)]\},
\end{aligned}\end{eqnarray}
where $\theta_n^*(x)$ is between $PD(x,F_n^*)$ and $PD(x,F)$, hence satisfying
\bes
    \sup_{x\in\mathbb{R}}|\theta_n^*(x)-PD(x,F)|=O_p\ll(\frac{1}{\sqrt n}\rr),~ n\to\infty.
\ens
Note that as $n\to\infty$
\bes
    \sup_{x\in\mathbb{R}} \left|x[PD(x,F_n^*)-PD(x,F)]\right| = O_p\ll(\frac{1}{\sqrt n}\rr),
\ens
which implies that
\bes
    \left|\int_{-\infty}^{\infty} x\{w'(\theta_n^*(x))-w'(PD(x,F))\}\{\sqrt n[PD(x,F_n^*)-PD(x,F)]\}dF_n^*(x)\right|=o_p(1)
\ens
and
\begin{eqnarray*}
    &&\left|\int_{-\infty}^{\infty} xw'(PD(x,F))\{\sqrt n[PD(x,F_n^*)-PD(x,F)]\}d\{F_n^*(x)-F(x)\}\right|\\
    &&=\frac1{\sqrt n}\left|\int_{-\infty}^{\infty} xw'(PD(x,F))\{\sqrt n[PD(x,F_n^*)-PD(x,F)]\}d\{\sqrt n[F_n^*(x)-F(x)]\}\right|\\
    &&=o_p(1).
\end{eqnarray*}
Hence we have as $n\to\infty$
\be
\label{eq-1828-3}
    &&\int_{-\infty}^{\infty} x w'(\theta_n^*(x))\{\sqrt n[PD(x,F_n^*)-PD(x,F)]\}dF_n^*(x)\\
    &&=\int_{-\infty}^{\infty} x w'(PD(x,F))\{\sqrt n[PD(x,F_n^*)-PD(x,F)]\}dF(x)+o_p(1).\nonumber
\en
It follows from (\ref{eq-1828-3}) and Fubini's theorem that
\begin{eqnarray}\label{eq-1828-4}
\begin{aligned}
&\int_{-\infty}^{\infty} xw'(\theta_n^*(x))\{\sqrt n[PD(x,F_n^*)-PD(x,F)]\}dF_n^*(x)\\
&=\int_{-\infty}^{\infty} xw'(PD(x,F))\{\sqrt n[PD(x,F_n^*)-PD(x,F)]\}dF(x)+o_p(1)\\
&=\int_{-\infty}^{\infty} xw'(PD(x,F))\left(\int_{-\infty}^{\infty} f(x,y)d\{\sqrt n[F_n^*(y)-F(y)]\}\right)dF(x)+o_p(1)\\
&=\int_{-\infty}^{\infty}\int_{-\infty}^{\infty} yw'(PD(y,F)) f(y,x)dF(y)d\{\sqrt n[F_n^*(x)-F(x)]\}+o_p(1).
\end{aligned}\end{eqnarray}
Similarly, we can show that
\begin{equation}\label{eq-1828-5}
\int_{-\infty}^{\infty} w(PD(x, F_n^*)) dF_n^*(x)=\int_{-\infty}^{\infty} w(PD(x,F))dF(x)+o_p(1).
\end{equation}
Then the desired result follows from (\ref{eq-1828-1}), (\ref{eq-1828-2}), (\ref{eq-1828-4}), (\ref{eq-1828-5}) and Slutsky's theorem.
\end{proof}

\begin{corollary}\label{co:03}
Under the conditions of Theorem \ref{th:04}, we have as $n\to\infty$
\[\sqrt n\lra{P\!W\!\!M(F_n^*)-P\!W\!\!M(F)} \toind N\left(0, ~2\text{var}[K(X)]\right),\]
where\[K(x)=\frac{\int_{-\infty}^{\infty} [y-P\!W\!\!M(F)]w'(PD(y,F))f(y,x) dF(y)+[x-P\!W\!\!M(F)]w(PD(x,F))}{\int_{-\infty}^{\infty} w(PD(x,F)) dF(x)}\]
with $f(x,y)$ defined in Theorem \ref{th:04}.
\end{corollary}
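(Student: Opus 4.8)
The plan is to obtain the corollary from the weak Bahadur representation of Theorem \ref{th:04} by reproducing the convolution argument used in the proof of Theorem \ref{th:03}. Write $\bar K_n=\frac1n\sum_{j=1}^n K(X_j)$ for the empirical mean of $K$ over the original sample. Conditionally on $X_1,\dots,X_n$ the resampled values $X_1^*,\dots,X_n^*$ are i.i.d.\ from $F_n$, so $\E^* K(X_1^*)=\bar K_n$ and hence $\E K(X_1^*)=\E\bar K_n=\E K(X_1)$. Substituting this into Theorem \ref{th:04} and multiplying by $\sqrt n$ gives the decomposition
\bes
    \sqrt n\,[P\!W\!\!M(F_n^*)-P\!W\!\!M(F)]
    &=& \frac1{\sqrt n}\sum_{i=1}^n\left[K(X_i^*)-\bar K_n\right]\\
    && {}+\frac1{\sqrt n}\sum_{j=1}^n\left[K(X_j)-\E K(X_j)\right]+o_p(1).
\ens
The two sums will play the roles of the ``within-bootstrap'' fluctuation $U_n-h$ and the ``original-sample'' term $h$ in Lemma \ref{lm-1107-9}, and the factor $2$ in the limiting variance will arise as the variance of the convolution of their two (equal) Gaussian limits.

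First I would treat the original-sample term. Since $K(X_1)-\E K(X_1),\dots,K(X_n)-\E K(X_n)$ are i.i.d.\ with mean zero and finite variance, the classical central limit theorem gives
\[
    h(X_1,\dots,X_n):=\frac1{\sqrt n}\sum_{j=1}^n\left[K(X_j)-\E K(X_j)\right]\toind N(0,\text{var}[K(X)]),
\]
so that $H=N(0,\text{var}[K(X)])$ in the notation of Lemma \ref{lm-1107-9}; this also matches the known asymptotic normality of the sample estimator $P\!W\!\!M(F_n)$. The needed integrability is guaranteed by the boundedness of $K$: as $|x|\to\infty$ one has $PD(x,F)\to0$, and since $w(0)=0$ with $w$ continuously differentiable, $w(PD(x,F))=w'(0)PD(x,F)+o(PD(x,F))=O(1/|x|)$, so the direct term $[x-P\!W\!\!M(F)]w(PD(x,F))$ stays bounded; the integral term is bounded because $f(y,x)$ depends on $x$ only through bounded indicator functions. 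Hence $K(X)$ possesses finite moments of all orders.

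Next I would treat the within-bootstrap term conditionally. Given the data, $K(X_1^*),\dots,K(X_n^*)$ are i.i.d.\ with conditional mean $\bar K_n$ and conditional variance $\hat\tau_n^2=\frac1n\sum_{j=1}^n(K(X_j)-\bar K_n)^2$, and by the strong law $\bar K_n\to\E K(X)$ and $\hat\tau_n^2\to\text{var}[K(X)]$ almost surely. Applying the Berry--Esseen bound to the standardized conditional sum, exactly as in the corresponding step of the proof of Theorem \ref{th:03}, yields
\bes
    \lefteqn{\sup_{s\in\R}\left|\P^*\!\left(\frac1{\sqrt n\,\hat\tau_n}\sum_{i=1}^n\left(K(X_i^*)-\bar K_n\right)\leq s\right)-\Phi(s)\right|}\\
    &\leq& \frac{C}{\sqrt n}\,\frac{\frac1n\sum_{j=1}^n|K(X_j)-\bar K_n|^3}{\hat\tau_n^3}\to0,\quad a.s.,
\ens
with $C$ an absolute constant, the convergence following because boundedness of $K$ makes the empirical third absolute moment converge almost surely to a finite limit. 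Together with $\hat\tau_n\to\text{var}[K(X)]^{1/2}$ a.s., this shows that, conditionally on $X_1,\dots,X_n$, the within-bootstrap sum converges to $N(0,\text{var}[K(X)])$ almost surely, i.e.\ the conditional hypothesis of Lemma \ref{lm-1107-9} holds with $F=N(0,\text{var}[K(X)])$.

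Finally I would set $U_n=\frac1{\sqrt n}\sum_{i=1}^n[K(X_i^*)-\bar K_n]+h(X_1,\dots,X_n)$, so that $U_n-h$ is the within-bootstrap sum and Lemma \ref{lm-1107-9} applies: the convolution $H*F$ of two independent $N(0,\text{var}[K(X)])$ laws is $N(0,2\,\text{var}[K(X)])$, whence $U_n\toind N(0,2\,\text{var}[K(X)])$. Since Theorem \ref{th:04} gives $\sqrt n[P\!W\!\!M(F_n^*)-P\!W\!\!M(F)]=U_n+o_p(1)$, Slutsky's theorem transfers the limit to the left-hand side and the corollary follows. I expect the only genuinely delicate point to be the almost-sure control needed for the Berry--Esseen step, namely that $\hat\tau_n$ has a strictly positive limit (nondegeneracy of $K(X)$, the degenerate case being handled trivially since then both sums are $o_p(1)$) and that the empirical third absolute moment stays bounded a.s.; both reduce to the boundedness of $K$, which in turn rests on the assumption $w(0)=0$.
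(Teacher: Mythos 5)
Your proof is correct and takes essentially the same route the paper intends: its own proof is a one-line appeal to Theorem \ref{th:04}, the CLT for the sample $P\!W\!\!M$ (Theorem 3.1 of Zuo et al., 2004), and the convolution/Berry--Esseen argument from Theorem \ref{th:03}, which is precisely the decomposition into the conditional bootstrap fluctuation and the original-sample term that you carry out, each contributing variance $\text{var}[K(X)]$ and combining via Lemma \ref{lm-1107-9} to give $2\,\text{var}[K(X)]$. Your added checks (boundedness of $K$ from $w(0)=0$, nondegeneracy for the Berry--Esseen step) are details the paper omits but are consistent with its argument.
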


\begin{proof}[Proof of Corollary \ref{co:03}]
The conclusion follows from Theorem \ref{th:04}, Theorem 3.1 of \cite{ZCH2004}, and the arguments in the proof of Theorem \ref{th:03}, and we omit the details.
\end{proof}

\begin{remark}
  To increase the breakdown point robustness of the projection median, \cite{Zuo2003} suggested to use a modified sample $\textrm{MAD}$, i.e.,
  \bes
      \textrm{MAD}^{*}_{nk}=\frac{1}{2}\left(W^{*}_{\ll \lfloor\frac{n+k}{2}\rr \rfloor:n}   +W^{*}_{\ll\lfloor\frac{n+k+1}{2}\rr\rfloor:n}\right),
  \ens
for some proper choice of $k=1, \ldots, n-1$, instead, in the definition of the projection depth. If the weighted mean is defined on this depth, the limit distribution of the related bootstrap estimator can be derived in a similar way to those of Theorem \ref{th:01} and Theorem \ref{th:02}.
\end{remark}

\section{Concluding remarks}\label{Sec:conclusion}
\paragraph{}

In this paper, we considered the strong and weak Bahadur representations of the bootstrap MAD, and then used these results to derive the joint limit distribution of the bootstrap sample median and MAD. As an application, we further investigated the weak Bahadur and limit distribution of the bootstrap projection depth weighted mean in one-dimensional space. Being aware that the limit distribution of some inferential estimators/procedures induced from the projection depth, e.g., projection median, is not standard in spaces of dimension greater than 1 due to involving the methodology of projection pursuit. This may hamper their practical applications. Additional bootstrap procedures are needed to obtain the related critical values. We hope the research conducted in the current paper will have the potential to help these studies.

\section*{Acknowledgements}
\paragraph{}

Xiaohui Liu's research was supported by NSF of China (Grant No.11601197, 11461029), China Postdoctoral Science Foundation funded project (2016M600511, 2017T100475), the Postdoctoral Research Project of Jiangxi (2017KY10), NSF of Jiangxi Province (No.20171ACB21030).

\bigskip

\end{document}